\newtheorem{theorem}{{\sc Theorem}}[section]
\newtheorem{lemma}[theorem]{{\sc Lemma}}
\newtheorem{corollary}[theorem]{Corollary}
\newtheorem{remark}[theorem]{Remark}
\newtheorem{definition}[theorem]{Definition}
\newcommand\restr[2]{{ \left.\kern-\nulldelimiterspace #1 \vphantom{\big|} \right|_{#2}}}
\newcommand{\RR}{\mathbb{R}}
\newcommand{\ZZ}{\mathbb{Z}}
\newcommand{\CC}{\mathbb{C}}
\newcommand{\n}{\noindent}
\newcommand{\comment}[1]{}
\renewcommand{\div}{\mathrm{div}}
\DeclareMathAlphabet{\pazocal}{OMS}{zplm}{m}{n}
\newcommand{\PC}{\pazocal{C}}
\newcommand{\CT}{\pazocal{T}}
\newcommand{\PI}{\pazocal{I}}
\newcommand{\CM}{\pazocal{M}}
\title{Scattering from analytic and piecewise analytic inhomogeneities}
\author{Narek Hovsepyan\footnote{Department of Mathematics, Rutgers University, New Brunswick, NJ, USA (nh507@math.rutgers.edu)} \quad and \quad Michael S. Vogelius\footnote{Department of Mathematics, Rutgers University, New Brunswick, NJ, USA (vogelius@math.rutgers.edu)}}
\date{}
\begin{document}
\maketitle

\begin{abstract}
We study scattering for the linear Helmholtz operator in two dimensions and develop a technique, which can be used to ascertain scattering of a given incident wave from very regular inhomogeneities. This technique is then applied to a number of interesting examples.

\end{abstract}

\section{Introduction}
\setcounter{equation}{0}

The study of the precise effect of the geometry (shape) and regularity of an inhomogeneity on its ability to scatter all (or almost all) incident electromagnetic fields has received significant attention recently (see e.g. \cite{PSV,BPS,CV,SS21,VX,LHY,SS24} and references therein).

In this paper we focus on fields governed by the two dimensional linear Helmholtz equation $(\Delta +k^2n)v=0$, with the index of refraction, $n$, being $1$ outside the bounded inhomogeneity, and attaining a different constant value, $q$, inside. Within the class of Lipschitz inhomogeneities it has been shown that generically nonscattering can only occur if the inhomogeneity has a real analytic boundary \cite{CV}. The initial assumption on the class of inhomogeneities can be relaxed to allow for certain cusps \cite{SS21}. To be more precise, and specializing to our case of a constant refractive index $q \neq 1$, the results of \cite{CV, SS21} state that if the inhomogeneity $D$ has a boundary that is not real analytic at a point $x_0 \in \partial D$, then any incident wave that is nonvanishing at $x_0$ is scattered by $D$. This nonvanishing assumption on the incident wave was removed in the recent work \cite{SS25}, under more restrictive a priori assumptions on $D$. Specifically, the authors show that convex or piecewise $C^1$ planar inhomogeneities, whose boundary has a singular (non-$C^1$) point, always scatter. The case of piecewise $C^2$ planar inhomogeneities was treated in \cite{EH}. Analogous results also hold in higher dimensions for domains with edge singularities \cite{EH, SS25}.

For an inhomogeneity in the shape of a disk (or a ball) it is well known that there exists a countably infinite set of real wave numbers and associated Herglotz incident waves that are not scattered \cite{CK19,VX}. What is less understood are the scattering properties of inhomogeneities with real analytic boundaries other than spheres, as well as inhomogeneities with inward cusps (not excluded by the analysis in \cite{SS24}). A natural question is: ``does any simply connected inhomogeneity, with a non-spherical, real analytic  boundary and a constant index of refraction different from $1$, scatter any incident wave?". While we do not know the answer to this question, here we develop a technique that in many instances will allow one to find very minimal conditions on an incident wave that guarantee scattering. We apply this technique to a number of interesting examples, including the ellipse and a non-convex real analytic domain. We also apply it to two domains with cusps -- one where scattering was already affirmed in \cite{SS24} and one where the results in \cite{SS24} were inconclusive. Finally we apply it to the much studied example of a piecewise analytic domain with a corner -- confirming that this generically scatters.

We note that the existence of a (non-trivial) nonscattering incident field at wave number $k$ implies that $k$ is a so called transmission eigenvalue. It is well known that for any Lipschitz domain with a constant index of refraction $\ne 1$ there exists a countably infinite set of real transmission eigenvalues \cite{CGH}. The nonscattering question is therefore if that element of the eigenfunction pair, which represents the incident field, can be extended to a solution to $(\Delta +k^2)v=0$ in all of $\mathbb{R}^d$ for any of these transmission eigenvalues.

The nonscattering problem we study here is in some sense a more general version of the Schiffer (or Pompeiu) problem, and the techniques developed here are closely related to those used for the Pompeiu problem in \cite{GS91,GS91TR}. Our techniques are also related to those used in \cite{VX,VX2} to establish the finiteness of nonscattering wave numbers for non-circular domains in the case of  incident Herglotz waves with a fixed density.


\section{Preliminaries} \label{SECT Prelim}
\setcounter{equation}{0}

\n Consider a non-trivial incident wave at a fixed wave number $k>0$

\begin{equation} \label{u^in Helm}
\Delta u^{\text{in}} + k^2 u^{\text{in}} = 0 \qquad \qquad \text{in} \ \RR^2.
\end{equation}

\n Let $D$ be a bounded, simply connected planar domain whose boundary may include cusps and suppose that it has a constant refractive index $0 < q \neq 1$. Let $u^{\text{tot}}$ denote the total field in the presence of the inhomogeneity $D$ and the incident field $u^{\text{in}}$, and set $u=u^{\text{tot}}- u^{\text{in}}$. If $u^{\text{in}}$ is non scattering, then $u=0$ in $\mathbb{R}^2 \setminus D$ and solves

\begin{equation} \label{u system}
\Delta u + k^2 q u = k^2 (1 - q) u^{\text{in}}\chi_{D} \qquad
 \text{in} \ \mathbb{R}^2~,
\end{equation}
where $\chi_D$ denotes the indicator function of $D$. We readily conclude from \eqref{u system} that $u \in H^2(D)\cap C^{1}(D)$. 
Note that if $D$ is sufficiently regular, for instance Lipschitz or piecewise smooth with cusps, then the condition $u=0$ in $\mathbb{R}^2 \setminus D$ is equivalent to $u= \frac{\partial u}{\partial n}=0$ on $\Gamma$, the boundary of $D$, where $n$ denotes the unit outer normal. Multiplying the PDE by a test function $\phi$ and using the Green's identity we obtain the following necessary condition for  $u^{\text{in}}$ to be nonscattering

\begin{equation} \label{test fctn}
\int_D u^{\text{in}} \phi dx = 0 \qquad \qquad \text{for any} \quad \Delta \phi + k^2 q \phi = 0.
\end{equation}

\n Our goal is to derive a contradiction to the above equation by appropriately choosing a parametrized family of test functions and analyzing the asymptotic behavior of the above integral with respect to this parameter. The desired contradiction is achieved by imposing an appropriate nondegeneracy condition on $u^{\text{in}}$ which guarantees that the leading term (or the next-order term) of this asymptotic expansion is nonzero. By having reached a contradiction to \eqref{test fctn} this condition is thus sufficient to guarantee that $u^{\text{in}}$ is scattering.  However, analyzing the above integral is not an easy task. First, let us reduce it to an integral over the boundary $\Gamma$ using the equations that $u^{\text{in}}$ and $\phi$ satisfy. If we insert $u^{\text{in}} = -k^{-2} \Delta u^{\text{in}}$, and then use the Green's identity, we arrive at

\begin{equation*}
-k^2 \int_D u^{\text{in}} \phi dx = \int_D  u^{\text{in}}  \Delta \phi dx + \int_{\Gamma} \left(\partial_n u^{\text{in}} \phi - u^{\text{in}} \partial_n \phi\right) ds.
\end{equation*}

\n Replacing $\Delta \phi$ with $-k^2 q \phi$ and using that $q$ is constant, we deduce that 

\begin{equation*}
k^2 (q-1) \int_D u^{\text{in}} \phi dx = \int_{\Gamma} \left(\partial_n u^{\text{in}} \phi - u^{\text{in}} \partial_n \phi \right)ds.
\end{equation*}

\n Consequently, if $u^{\text{in}}$ does not scatter, then

\begin{equation} \label{test fctn 2}
\int_{\Gamma} \left( \partial_n u^{\text{in}} \phi - u^{\text{in}} \partial_n \phi \right) ds = 0 \qquad \qquad \text{for any} \quad \Delta \phi + k^2 q \phi = 0.
\end{equation}

\n The main idea is to take a plane wave solution as a test function, but with a complex wave vector. Namely, $\phi(x) = e^{i x \cdot \xi}$, where $\cdot$ denotes the inner product of $\RR^2$ and $\xi \in \CM$ with

\begin{equation} \label{CX}
\CM = \left\{ \xi \in \CC^2 : \xi \cdot \xi = k^2 q \right\}.
\end{equation}

\n The restriction imposed on $\xi$ ensures that $\phi$ satisfies the required PDE. For this choice of test function, \eqref{test fctn 2} can be simplified to

\begin{equation} \label{test fctn xi}
\int_{\Gamma} n \cdot \left( \nabla u^{\text{in}} - i \xi u^{\text{in}} \right) e^{i x \cdot \xi} ds = 0 \qquad \qquad \forall \, \xi \in \CM.
\end{equation}
The desired contradiction will be achieved by analyzing the asymptotics of the boundary integral above, as $|\Im \xi| \to \infty$. 

\n Note that $\xi \in \CM$ iff

\begin{equation*}
\Re \xi \cdot \Im \xi = 0 \qquad \text{and} \qquad
|\Re \xi|^2 - |\Im \xi|^2 = k^2 q.
\end{equation*}

\n Let us take the following family of vectors from $\CM$, parametrized by $\lambda \geq 0$:

\begin{equation*}
\Im \xi = \lambda \, (- 1, 0),
\qquad \qquad
\Re \xi = \sqrt{\lambda^2 + k^2 q} \, (0, 1).
\end{equation*}

\n It will be convenient to write

\begin{equation} \label{xi with lambda}
\xi = (- i \lambda, \sqrt{\lambda^2 + k^2 q}) = \lambda (-i, 1) + \tilde{\lambda} (0, 1),
\end{equation}

\n where

\begin{equation} \label{lambda tild asymp}
\tilde{\lambda} = \sqrt{\lambda^2 + k^2 q} - \lambda \sim \frac{k^2 q}{2 \lambda}.
\end{equation}

\n and the asymptotic equivalence holds as $\lambda \to \infty$.

\begin{remark} \label{REM alpha X}
\normalfont

In fact, there is second degree of freedom in $\CM$. It is not hard to see  that vectors in $\CM$ can be parametrized by

\begin{equation*}
\xi = \lambda e^{i \alpha} (-i, 1) + \tilde{\lambda} (\sin \alpha, \cos \alpha),
\end{equation*}

\n where $\lambda \in \RR$ and $\alpha \in [- \pi, \pi]$. However, we are not making use of the parameter $\alpha$ as it does not affect our first- and second-order nondegeneracy conditions. To be more precise, the $\alpha$-dependence fully factors out and cancels in these conditions. Therefore, we set $\alpha = 0$, which also simplifies the analysis. However, see the discussion preceding Remark~\ref{REM analyt of u^in}.
\end{remark}

Let $\Gamma$ be parametrized counterclockwise by $x(t) = (x_1(t), x_2(t))$ with $t \in [-\pi, \pi]$, and assume there are at most finitely many points where $x'=\frac{d}{dt}x$ vanishes. We may write the exponent in \eqref{test fctn xi} as

\begin{equation*}
i x(t) \cdot \xi = \lambda g(t) + i \tilde{\lambda} x_2(t),
\qquad \text{where} \qquad g(t) = x_1(t) + i x_2(t).
\end{equation*}

\n Next we consider the term multiplying the exponential inside the integral in \eqref{test fctn xi} and simplify it. To that end note that at any point where $x'$ doesn't vanish, the outer unit normal is given by $n = (x_2', -x_1') / |x'|$, so we may write

\begin{equation*}
-i \xi \cdot (x_2', -x_1') = i \lambda g'(t) + i \tilde{\lambda} x_1'(t).
\end{equation*}

\n Introducing further notation

\begin{equation} \label{v V}
v(t) = u^{\text{in}}(x(t)), \qquad V(t) = \nabla u^{\text{in}}(x(t)) 
\end{equation}

\n the integral in \eqref{test fctn xi} becomes

\begin{equation} \label{I}
I(\lambda)=\int_{-\pi}^\pi \left[ (x_2', -x_1') \cdot V +  i \lambda g' v + i \tilde{\lambda} x_1' v  \right] e^{\lambda g + i\tilde{\lambda} x_2} dt.
\end{equation}

\n In summary, if $u^{\text{in}}$ is nonscattering for the region $D$, then

\begin{equation*}
I(\lambda) = 0, \qquad \qquad \forall \ \lambda > 0.
\end{equation*}

\n Our goal is to study the asymptotic behavior of $I(\lambda)$ as $\lambda \to \infty$. Under suitable assumptions, we obtain the first- and second-order terms explicitly in the asymptotic expansion. If at least one of these terms is nonzero, we arrive at a contradiction and conclude that $u^{\text{in}}$ scatters.

The integral $I(\lambda)$ contains both real exponential and highly oscillatory terms as $g(t)$ is a complex-valued function. We analyze its asymptotic behavior using the method of steepest descent \cite{olver}, which requires the integrand to extend to the complex $t$-plane as an analytic function. The method of steepest descent is a powerful tool for studying the asymptotics of complex analytic integrals; however, it does not provide a general result applicable to an arbitrary region $D$. The asymptotic behavior depends on (a) the critical/saddle points (if any) of the analytic extension $g(t)$, i.e., the zeros of $g'(t)$, (b) the structure of the steepest descent paths, (c) the presence (or absence) of singularities, and (d) the contour deformations. Consequently, each example may present an interesting mathematical challenge.  

The outline of the paper is as follows. The main results and a representative set of applications are stated and carefully described in the following section (Section~\ref{mainsect}), however, most of the proofs are postponed to Section \ref{SECT proof circle} and Section \ref{SECT proofs 1st 2nd order}. In Section~\ref{SECT gen} we focus on the case where $g(t)$ has a simple saddle point (say at $t=t_0$) and formulate a nondegeneracy condition 

\begin{equation*}
(q-1) u^{\text{in}} (x(t_0)) x_2'(t_0) \neq 0
\end{equation*}

\n under which analytic incident waves always scatter (a second-order condition is also formulated). We apply this result to a true\footnote{i.e., the boundary $\Gamma$ is an ellipse that is not a circle.} elliptical region in Section~\ref{SECT ellipse} and to a nonconvex region (with regular boundary) in Section~\ref{SECT nonconvex}. In both of these cases, the point $x(t_0)$ lies in  $\CC^2 \backslash \RR^2$. In Section~\ref{SECT cusp}, we apply our result to regions with cusps -- both outward- and inward-pointing -- in which case $x(t_0) \in \RR^2$ corresponds precisely to the cusp point. The scattering result for outward-pointing cusps is known and follows from \cite{SS21}. However, our result for inward-pointing cusps is new. Our method of analysis can be generalized to obtain nondegeneracy conditions for regions with multiple and/or higher-order critical/saddle points. In Section~\ref{SEC corner}, we apply our approach to convex regions with a corner and establish (a slightly weaker version of) the result from \cite{BPS}, namely, that any incident wave that is nonvanishing at the corner is always scattered. We note that the asymptotic analysis in this case is simpler, as $\Re g$ is maximized at the endpoints of the two boundary integrals forming the corner. In particular, critical points and contour deformations are irrelevant in this case. The resulting nondegeneracy condition is analogous: $(q-1) u^{\text{in}}(p) \neq 0$, where $p$ represents the corner point.

We study the case of a disk in Section~\ref{SEC disk}. Even though a circle can be viewed as a degenerate limit of a true ellipse, the method of steepest descent used to analyze the true ellipse does not apply to the circle. For a true ellipse, the phase function $g(t)$ has a saddle point in the complex plane, which governs the leading behavior of the integral (after appropriate contour deformation). However, as the true ellipse approaches a circle, this saddle point disappears at infinity. We handle this degenerate case by manipulating the integral and changing the phase function, which creates a singularity for the integrand in the complex plane. We then analyze the residue of the integrand at this singularity. We study incident plane waves and establish that they always scatter, a result known in the literature. However, our method provides an alternative and elementary proof of this fact. We also study specific Herglotz waves that are known to be nonscattering for the disk at special values of $k$ and $q$. We rederive the equation that these parameter values must satisfy in order to produce nonscattering. This equation corresponds to the vanishing of the leading-order term in $I(\lambda)$, which, in this case, is equivalent to the vanishing of $I(\lambda)$ itself, as we derive a closed-form expression for it. Our methods can be extended to study broader classes of incident waves, which shall be addressed in the future.  
   
The above discussion highlights the delicate nature of the asymptotic behavior of $I(\lambda)$. Another point that further illustrates this, is the following: although $\tilde{\lambda} \to 0$ as $\lambda \to \infty$, the exponential term $e^{i \tilde{\lambda} x_2(t)}$ in the integral \eqref{I} cannot be dropped (replaced by 1). There are delicate cancellations happening in the integral which make the parameter $\tilde{\lambda}$ relevant and non-negligible.

We already mentioned the connection to the Schiffer and/or Pompeiu problem, which formally corresponds to setting $u^{\text{in}} = 1$ in the integral $I(\lambda)$ \cite{BST73, GS91, vog94}. Analyzing these problems via the asymptotic expansion of $I(\lambda)$ dates back to \cite{bern80}. In this context, the fact that $I(\lambda)$ cannot vanish identically for a true elliptical region follows from \cite{BST73}, relying on a closed-form expression of the integral in terms of the Bessel function $J_1$. Asymptotic methods based on the steepest descent technique were later employed in \cite{GS91} to reestablish this result as a particular case and to extend it far beyond elliptical regions (see also the subsequent works \cite{GS91TR, GS94}). The nonconvex region we study in Section~\ref{SECT nonconvex} is borrowed from \cite{GS91}. The disk is an exceptional region that does not possess the Pompeiu property (this observation goes back to \cite{Chak44}); that is, the integral $I(\lambda)$, with $u^{\text{in}} = 1$, can vanish identically if the radius of the disk is chosen appropriately. The Pompeiu/Schiffer conjecture states that the disk is the only domain (among bounded, simply connected Lipschitz domains) that does not possess the Pompeiu/Schiffer property. We refer the reader to Section \ref{appendix}, where we review the relation between the Pompeiu and Schiffer problems and give a brief survey of the main results.

In the scattering context, there is an additional degree of freedom -- namely, the incident wave $u^{\text{in}}$. As a result, the disk has a more nuanced behavior: it exhibits both positive and negative scattering outcomes depending on the choice of incident wave. For example, the disk always scatters plane waves, whereas certain Herglotz waves (for certain values of $k$ and $q$) are not scattered by the disk. As stated before, a natural question arises: ``among bounded simply connected Lipschitz domains with constant index of refraction $\ne 1$, is the disk the only domain that may fail to scatter?" Our work provides some evidence that, this question may be more likely than not to have an affirmative answer.

\section{Main results}
\label{mainsect}
\setcounter{equation}{0}

\subsection{The case of simple critical points} \label{SECT gen}

For this very useful case we make the following assumptions:

\begin{enumerate}

\item[(H1)] $\Gamma$ is a simple closed curve parametrized by $x(t) = (x_1(t), x_2(t))$ for $t \in [-\pi, \pi]$, where $x_j(t)$ for $j=1,2$ extend to complex analytic functions in an open set $\CT \subset \CC$, which contains the interval $[-\pi, \pi]$.  

\item[(H2)] $D$, the domain enclosed by $\Gamma$, has a constant refractive index $0<q \neq 1$. 

\item[(H3)] $g(t) = x_1(t) + ix_2(t)$ has a simple critical point at $t_0 \in \CT$, {\it i.e.}, $g'$ has a simple zero at $t_0$, and $t_0 \neq \pm \pi$.

\item[(H4)] There exists a contour $\PC$ in $\CT$ joining $-\pi$ to $\pi$ (and containing both these endpoints) that passes through $t_0$, such that

\begin{equation} \label{Re g < Re g t_0}
\Re g(t) < \Re g(t_0) \qquad \qquad \forall \ t \in \PC \backslash \{t_0\}.
\end{equation}

\item[(H5)] $u^{\text{in}}(x_1,x_2)$ extends to an analytic function of two complex variables in some open subset of $\CC^2$ containing

\begin{equation*}
\left\{(x_1(t), x_2(t)): t \in \CT \right\}.
\end{equation*}
 
\end{enumerate}

\begin{theorem} \label{THM gen}
Assume (H1)-(H5) and let $I(\lambda)$ be given by \eqref{I}. Then, as $\lambda \to \infty$

\begin{equation} \label{I C1 asymp}
I(\lambda) = \frac{1}{\lambda^\frac{3}{2}} e^{\lambda g(t_0)} \left[C_1 + O\left( \tfrac{1}{\lambda} \right) \right], 
~~ \hbox{ with } ~~
C_1 = k^2 (q-1) \sqrt{\frac{-2\pi}{g''(t_0)}}  u^{\text{in}}(x(t_0)) x_2'(t_0).
\end{equation}

\n In particular, under the nondegeneracy condition

\begin{equation} \label{ND 1}
(q-1) u^{\text{in}}(x(t_0)) x_2'(t_0) \neq 0
\end{equation}

\n the incident wave $u^{\text{in}}$ is scattered by $D$.

\end{theorem}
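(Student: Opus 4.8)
The plan is to evaluate the fixed integral $I(\lambda)$ by the method of steepest descent, after first recasting its integrand so that the anticipated leading-order cancellation becomes transparent. By (H1) and (H5) the bracketed amplitude in \eqref{I} and the phase $g$ all extend to analytic functions on $\CT$; moreover, since $\Gamma$ is a closed curve, $x(-\pi)=x(\pi)$, so $g$, $v=u^{\text{in}}\circ x$, its gradient $V$, and $x_2$ all take equal values at the two endpoints. Hence the integrand is analytic and ``periodic'' across the endpoints, and Cauchy's theorem lets me replace the contour $[-\pi,\pi]$ by the contour $\PC$ of (H4) without generating endpoint contributions. On $\PC$, condition \eqref{Re g < Re g t_0} makes $t_0$ the unique point of maximal $\Re g$, so the asymptotics of $I(\lambda)$ are governed by a neighborhood of the saddle $t_0$.

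The decisive algebraic step is to integrate the explicitly $\lambda$-weighted term by parts. Writing $i\lambda g'\,e^{\lambda g}=i\tfrac{d}{dt}e^{\lambda g}$ and moving the derivative onto $iv\,e^{i\tilde\lambda x_2}$ (the boundary term vanishing by the periodicity just noted) turns the middle term of \eqref{I} into $(-iv'+\tilde\lambda x_2' v)e^{\lambda g+i\tilde\lambda x_2}$. Using $v'=V\cdot x'$ together with $g'=x_1'+ix_2'$, the two non-$\tilde\lambda$ contributions combine into a single multiple of $g'$:
\begin{equation*}
I(\lambda)=\int_{\PC}\Big[-g'(t)\,W(t)+\tilde\lambda\,v(t)\big(x_2'(t)+ix_1'(t)\big)\Big]e^{\lambda g+i\tilde\lambda x_2}\,dt,\qquad W:=i\,\partial_1 u^{\text{in}}+\partial_2 u^{\text{in}}.
\end{equation*}
This exhibits the sought cancellation: because $g'(t_0)=0$ the first piece carries no $O(\lambda^{-1/2})$ term, while the second carries the small prefactor $\tilde\lambda\sim k^2q/(2\lambda)$; both pieces are therefore $O(\lambda^{-3/2})$.

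I would then evaluate the two pieces by a standard Laplace/steepest-descent expansion about $t_0$, recalling that $\int_{\PC} f\,e^{\lambda g}\,dt\sim \lambda^{-1/2}\sqrt{-2\pi/g''(t_0)}\,e^{\lambda g(t_0)}f(t_0)$ (the branch of the root being fixed by the orientation of $\PC$ through $t_0$) and treating $e^{i\tilde\lambda x_2}=1+O(1/\lambda)$ as a slowly varying factor. Two simplifications close everything in terms of $u^{\text{in}}(x(t_0))$ alone: at the saddle $g'(t_0)=0$ forces $x_1'(t_0)=-i x_2'(t_0)$, so $x_2'(t_0)+ix_1'(t_0)=2x_2'(t_0)$; and a further integration by parts on the first piece, $-g'W e^{\lambda g}= -\lambda^{-1}W\tfrac{d}{dt}e^{\lambda g}$, reduces it to $\lambda^{-1}\int_{\PC}W'e^{\lambda g+i\tilde\lambda x_2}\,dt$, whose leading term needs only $W'(t_0)$. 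Using the saddle relation once more and the Helmholtz equation $\Delta u^{\text{in}}=-k^2u^{\text{in}}$ yields the clean identity $W'(t_0)=(\Delta u^{\text{in}})(x(t_0))\,x_2'(t_0)=-k^2 u^{\text{in}}(x(t_0))\,x_2'(t_0)$. The first piece then contributes a factor $-k^2$ and the second a factor $+k^2q$, multiplying the common quantity $\lambda^{-3/2}\sqrt{-2\pi/g''(t_0)}\,e^{\lambda g(t_0)}u^{\text{in}}(x(t_0))x_2'(t_0)$; their sum is precisely the stated $C_1=k^2(q-1)\sqrt{-2\pi/g''(t_0)}\,u^{\text{in}}(x(t_0))x_2'(t_0)$.

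Finally, if $u^{\text{in}}$ did not scatter then $I(\lambda)\equiv0$ by \eqref{test fctn xi}--\eqref{I}; but \eqref{I C1 asymp} together with \eqref{ND 1} gives $I(\lambda)=\lambda^{-3/2}e^{\lambda g(t_0)}(C_1+O(1/\lambda))$ with $C_1\neq0$, a contradiction, so $u^{\text{in}}$ scatters. The main obstacle is the rigorous justification of the steepest-descent step rather than the algebra above: one must check that the region swept between $[-\pi,\pi]$ and $\PC$ lies inside the analyticity domain $\CT$ so Cauchy's theorem applies; that the contributions away from $t_0$ are exponentially negligible, which is exactly where the strict inequality \eqref{Re g < Re g t_0} of (H4) enters; and, most delicately, that the $\tilde\lambda$-dependence is controlled uniformly, since on $\PC$ the factor $e^{i\tilde\lambda x_2}$ is no longer purely oscillatory ($x_2$ being complex there) but only a vanishing perturbation of the amplitude as $\tilde\lambda\to0$, affecting solely the $O(1/\lambda)$ remainder. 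Tracking these remainders (including those from the second integration by parts) carefully is what makes the error genuinely of relative order $O(1/\lambda)$.
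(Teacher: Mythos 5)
Your proposal is correct and follows essentially the same route as the paper: deform the contour to $\PC$, integrate by parts (twice) to expose the cancellation coming from $g'(t_0)=0$, apply the steepest-descent formula at the simple saddle, control the $e^{i\tilde\lambda x_2}$ and $\tilde\lambda$-corrections as uniform $O(1/\lambda)$ perturbations on the compact contour, and finally use the saddle relation $x_1'(t_0)=-ix_2'(t_0)$ together with the Helmholtz equation to reduce the amplitude to $k^2(q-1)u^{\text{in}}(x(t_0))x_2'(t_0)$. Your splitting into the two pieces $-g'W$ and $\tilde\lambda v(x_2'+ix_1')$, each expanded separately, is just a reorganization of the paper's single amplitude $f=\frac{k^2q}{2}(x_2'+ix_1')v+V'\cdot(i,1)$ plus remainder terms, and your identity $W'(t_0)=-k^2u^{\text{in}}(x(t_0))x_2'(t_0)$ is exactly the paper's Lemma~\ref{LEM f(t0)}.
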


\begin{remark} \mbox{} \label{REM branch}
\normalfont

\begin{enumerate}
\item[$\bullet$] In forming $(-g'')^{\frac{1}{2}}$, the branch of $\omega_0 = \arg (-g''(t_0))$ must satisfy $|\omega_0 + 2\omega| \leq \pi/2$, where $\omega$ is the limiting value of $\arg (t-t_0)$ as $t \to t_0$ along the part of $\PC$ that joins $t_0$ to $\pi$. In other words, $\omega$ is the angle of slope of $\PC$ at $t_0$ from ``right" \cite{olver}. 

\item[$\bullet$] Note that the condition \eqref{ND 1} does not depend on the refractive index $q$, except for the requirement that $q\neq 1$, in which case the factor $q-1$ can be dropped.

\item[$\bullet$] Since $t_0$ is a critical point for $g(t)=x_1(t)+ix_2(t)$, it follows that $x_2'(t_0)=ix_1'(t_0)$, and thus 
the nondegeneracy condition (\ref{ND 1}) could equally well have been expressed using $x_1'(t_0)$ in place of $x_2'(t_0)$.
\end{enumerate}

\end{remark}

The above assumptions place us in a context where we can apply the steepest descent method. Namely, the analyticity in $\CT$ allows us to deform the original contour $[-\pi, \pi]$ into the contour $\PC$ while conserving the integral. The unique maximizer $t_0$ (of $\Re g$ along $\PC$), which is also a simple critical point of $g$, is an interior point of the contour $\PC$. If there are multiple maximizers $t_1,...,t_N$ (all simple critical  points and in the interior of $\PC$), the asymptotic formula readily generalizes as a sum of the contributions from the points $t_j$. The method of steepest descent can also be used to treat higher order critical points, however, we confine ourselves to simple critical points as this occurs more commonly in practice. Theorem~\ref{THM gen} is quite versatile. In Section~\ref{SECT ellipse} we apply it to the elliptical region, and in Section~\ref{SECT nonconvex} we apply it to a nonconvex regular region borrowed from \cite{GS91}. In these cases, the corresponding phase function $g$ has a simple nonreal saddle point $t_0$. Moreover, $x_2'(t_0) \neq 0$ and therefore, dropping this term from \eqref{ND 1}, we obtain that incident waves satisfying $u^{\text{in}}(x(t_0)) \neq 0$ always scatter from these regular regions.  

It may happen that $t_0$ is real, say, $t_0 \in (-\pi, \pi)$. In this case, the condition that $t_0$ is a saddle point of $g$ is equivalent to $x'(t_0) = 0$, meaning that the curve $\Gamma$ has a vanishing tangent at $t_0$ --  this occurs at a cusp. In Section~\ref{SECT cusp} we consider two model examples, the cardioid curve, which has an inward cusp, and the deltoid curve, which has an outward cusp. However, note that now $x_2'(t_0) = 0$ and as a result the nondegeneracy condition \eqref{ND 1} is not satisfied. Therefore, we need to consider the next term in the asymptotic expansion of $I(\lambda)$ and obtain a second-order nondegeneracy condition. This is done in Theorem~\ref{THM 2nd order} below, which we apply to regions with cusps. We remark that Theorem~\ref{THM 2nd order} is also useful for regular domains in the non-generic situations where $u^{\text{in}}(x(t_0)) = 0$.   

For the deltoid curve, it turns out that \eqref{Re g < Re g t_0} holds with $\PC = [-\pi,\pi]$ and therefore contour deformation is not needed. In this case (H5) is automatically satisfied (see Remark~\ref{REM analyt of u^in} below) and we obtain that any incident wave that is nonvanishing at one of the cusps of the deltoid always scatters. This fact is known and follows from \cite{SS21}. 

For the cardioid curve contour deformation is necessary to ensure \eqref{Re g < Re g t_0}. We show that any incident wave (satisfying an analyticity assumption) that is nonvanishing at the cusp of the cardioid always scatters. This result is new and does not follow from \cite{SS21}. In fact, we believe that this is a general feature of inward cusps. See Section~\ref{SECT card} for a more detailed discussion.

We remark that if the maximizer $t_0$ occurs at an ``endpoint" of the contour, {\it i.e.}, $t_0 = \pm \pi$, then the assumption that $t_0$ be a saddle point is not necessary (but the resulting asymptotic behavior is different). This situation arises in Section~\ref{SEC corner} when studying regions with a corner.

Finally, if we also introduce the parameter $\alpha$ as described in Remark~\ref{REM alpha X}, then the phase function $g$ must be replaced with $e^{i\alpha} g$. This, of course, does not affect the saddle point $t_0$. However, it does affect assumption (H4): deforming the contour $[-\pi,\pi]$ into $\PC$ is a topological problem that can be delicate. In particular, introducing $\alpha$ modifies the condition \eqref{Re g < Re g t_0}, which now reads: 

$$\Re \left[ e^{i\alpha} g(t) \right]  < \Re \left[ e^{i\alpha} g(t_0) \right] \qquad \qquad \forall \ t \in \PC \backslash \{t_0\}.$$ 

\n In general, the existence of such a contour $\PC$ may hold for some choices of $\alpha$ and fail for others. Thus, $\alpha$ affects both the geometry of the contour and the domain of analyticity $\CT$. For example, in the application to the elliptical region discussed in Section~\ref{SECT ellipse}, one can take any $\alpha \in \left(-\frac{\pi}{2}, \frac{\pi}{2} \right)$. Any such $\alpha \neq 0$ distorts the shapes of the curves in Figure~\ref{FIG el} and leads to an analogous result to Corollary~\ref{CORO ellipse}, but with a different (distorted) domain of analyticity $\CT$.

\begin{remark}[Analyticity of $u^{\text{in}}$] \label{REM analyt of u^in}
\normalfont \mbox{}

\n The incident wave $u^{\text{in}}$ is a solution of the Helmholtz equation everywhere, and hence is real analytic in $\RR^2$. In particular, $u^{\text{in}}$ admits an extension to an analytic function of two complex variables in an open set $\Omega \subset \CC^2$ containing $\RR^2$. This is sufficient when no contour deformation is required. The assumption (H5), ensures analyticity in a sufficiently large domain to allow for contour deformation. Note that, as a consequence of (H5), $u^{\text{in}}$ composed with $x(t)$, as well as all partial derivatives of $u^{\text{in}}$ composed with $x(t)$, are analytic functions of $t \in \CT$.  
\end{remark}

The condition \eqref{ND 1} is sufficient for scattering. It can happen that it is not satisfied, {\it i.e.}, $C_1=0$, which of course does not mean that $u^{\text{in}}$ is nonscattering. In this case, one can look at the next term in the asymptotic expansion of $I(\lambda)$. To that end, let us introduce

\begin{equation*}
f = \frac{k^2 q}{2} (x_2' + ix_1') v + V' \cdot (i,1).
\end{equation*}

\n In fact, the condition \eqref{ND 1} is equivalent to $f(t_0) \neq 0$ (cf. Lemma~\ref{LEM f(t0)}).

\begin{theorem} \label{THM 2nd order}
Assume (H1)-(H5) and let $I(\lambda)$ be given by \eqref{I}. Let $C_1$ be as in \eqref{I C1 asymp}. If $C_1 = 0$, then, as $\lambda \to \infty$

\begin{equation} \label{I next term asymp}
I(\lambda) = \frac{1}{\lambda^\frac{5}{2}} e^{\lambda g(t_0)} \left[C_2 + O\left( \tfrac{1}{\lambda} \right) \right],
\end{equation}

\n where

\begin{equation*}
C_2 = \frac{2\sqrt{\pi}}{\left(-2g''(t_0) \right)^{\frac{3}{2}}} 
\left[ f''(t_0) -f'(t_0) \frac{g'''(t_0)}{g''(t_0)} - ik^2 q g''(t_0) x_2'(t_0) V(t_0) \cdot (i,1) \right].
\end{equation*}

\n In particular, if $C_1 = 0$ and $C_2 \neq 0$, then the incident wave $u^{\text{in}}$ is scattered by $D$.

\end{theorem}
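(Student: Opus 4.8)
The plan is to push the steepest-descent analysis behind Theorem~\ref{THM gen} one order further. By (H1), (H5) and Cauchy's theorem the integrand of \eqref{I} extends analytically into $\CT$, so I first deform $[-\pi,\pi]$ onto the contour $\PC$ of (H4), along which $\Re g$ attains its unique maximum at the simple saddle $t_0$. Every boundary term produced below lives at $t=\pm\pi$ and is therefore $O(e^{\lambda\Re g(\pm\pi)})$, which by (H4) is exponentially smaller than the target scale $\lambda^{-5/2}e^{\lambda g(t_0)}$ and can be discarded throughout.

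The heart of the argument is to expose the function $f$ as the \emph{effective amplitude}. Write $W=V\cdot(i,1)$ and $P=v\,(x_2'+ix_1')$. A first integration by parts, applied to the term $i\lambda g' v$ in \eqref{I} so as to remove the explicit factor $\lambda$, recasts the integral as
\begin{equation*}
I(\lambda)=\int_{\PC}\big[-g'W+\tilde\lambda P\big]\,e^{\lambda g+i\tilde\lambda x_2}\,dt .
\end{equation*}
Because $g'(t_0)=0$, the $O(1)$ piece $-g'W$ vanishes at the saddle, which invites a second integration by parts against $e^{\lambda g}$ (leaving $e^{i\tilde\lambda x_2}$ inside the amplitude); using $-g'e^{\lambda g}=-\lambda^{-1}\frac{d}{dt}e^{\lambda g}$ this gives
\begin{equation*}
I(\lambda)=\int_{\PC}\Big[\tfrac1\lambda\big(W'+i\tilde\lambda x_2'W\big)+\tilde\lambda P\Big]e^{\lambda g+i\tilde\lambda x_2}\,dt .
\end{equation*}
Now I insert $\tilde\lambda=\tfrac{k^2q}{2\lambda}+O(\lambda^{-3})$ from \eqref{lambda tild asymp} together with $e^{i\tilde\lambda x_2}=1+\tfrac{ik^2q}{2\lambda}x_2+O(\lambda^{-2})$, and collect the amplitude multiplying the pure phase $e^{\lambda g}$ as a series in $1/\lambda$:
\begin{equation*}
I(\lambda)=\int_{\PC}\Big[\tfrac{f}{\lambda}+\tfrac{ik^2q}{2\lambda^2}\big(x_2 f+x_2'W\big)+O(\lambda^{-3})\Big]e^{\lambda g}\,dt,\qquad f=W'+\tfrac{k^2q}{2}P .
\end{equation*}
This $f$ is exactly the one in the statement. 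The decisive simplification is that the two $\lambda^{-2}$ pieces $x_2W'$ and $\tfrac{k^2q}{2}x_2P$ recombine into $x_2 f$; this is what forces all $x_2$-weighted terms to vanish at the saddle once $f(t_0)=0$.

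I then apply the standard simple-saddle expansion \cite{olver}, $\int_{\PC}\psi\,e^{\lambda g}\,dt\sim\sqrt{2\pi/(-\lambda g''(t_0))}\,e^{\lambda g(t_0)}[\psi(t_0)+\lambda^{-1}b_1(\psi)+\cdots]$, with $(-g'')^{1/2}$ branched as in Remark~\ref{REM branch}. By Lemma~\ref{LEM f(t0)} the hypothesis $C_1=0$ is precisely $f(t_0)=0$; this annihilates the order-$\lambda^{-3/2}$ term and strips from $b_1(f)$ every contribution proportional to $\psi(t_0)$, leaving $b_1(f)=\frac{1}{-2g''(t_0)}\big(f''(t_0)-\frac{g'''(t_0)}{g''(t_0)}f'(t_0)\big)$. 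Collecting the two surviving order-$\lambda^{-5/2}$ contributions — the coefficient $b_1(f)$ from the $f/\lambda$ term, and the saddle value of the $\lambda^{-2}$-amplitude, which reduces to $\tfrac{ik^2q}{2}x_2'(t_0)W(t_0)$ since $f(t_0)=0$ kills $x_2(t_0)f(t_0)$ — yields
\begin{equation*}
C_2=\sqrt{\tfrac{2\pi}{-g''(t_0)}}\Big[b_1(f)+\tfrac{ik^2q}{2}x_2'(t_0)\,V(t_0)\cdot(i,1)\Big],
\end{equation*}
and substituting $b_1(f)$ together with the elementary prefactor identity $\sqrt{2\pi/(-g'')}\cdot\tfrac{ik^2q}{2}x_2'W=\frac{2\sqrt\pi}{(-2g'')^{3/2}}\big(-ik^2q\,g''(t_0)x_2'(t_0)W(t_0)\big)$ reproduces the stated $C_2$.

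The main obstacle is not any single step but the uniform bookkeeping: one must confirm that nothing else reaches order $\lambda^{-5/2}$. In particular the $O(\lambda^{-3})$ correction to $\tilde\lambda$, and the cross term $i\tilde\lambda^2 x_2 P$ coming from the quadratic part of $e^{i\tilde\lambda x_2}$, must be shown to be $O(\lambda^{-7/2})$; the latter is automatic because $P(t_0)=2v(t_0)x_2'(t_0)=0$ under $f(t_0)=0$, so it enters only through its $b_1$-coefficient. One also has to carry the remainder in the steepest-descent formula uniformly in $\lambda$ so that the error genuinely is $O(\lambda^{-1})$ relative to $C_2$, and to keep the branch of $(-g'')^{1/2}$ consistent with Remark~\ref{REM branch} throughout (recall $t_0$, hence $g''(t_0)$, may be non-real).
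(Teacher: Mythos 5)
Your proposal is correct and takes essentially the same route as the paper: the same two integrations by parts produce the amplitude $f$, the hypothesis $C_1=0$ is converted into $f(t_0)=0$ via Lemma~\ref{LEM f(t0)}, and the order-$\lambda^{-5/2}$ coefficient is assembled from the second steepest-descent coefficient of $\int_{\PC} f e^{\lambda g}\,dt$ together with the saddle value of the $\tfrac{ik^2q}{2}\,x_2'\,V\cdot(i,1)$ term, with the $x_2 f$ contribution killed by $f(t_0)=0$. Your explicit Taylor expansion of $\tilde{\lambda}$ and $e^{i\tilde{\lambda}x_2}$ in powers of $1/\lambda$ is just a repackaging of the paper's decomposition $\lambda I = I_0 + R_1 + R_2$ (with $h_\lambda = i\tilde{\lambda}x_2 + \tilde{h}_\lambda$ and $R_2 = R_{21}+R_{22}$), so the difference is bookkeeping, not substance.
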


The power $(-g'')^{\frac{3}{2}}$ is taken using the same branch of the argument function as described in Remark~\ref{REM branch}. Unlike $C_1$, the expression for $C_2$ generally does not simplify, as it involves third-order derivatives of the boundary parametrization $x(t)$ and fourth-order derivatives of $u^{\text{in}}$. This theorem will be particularly useful for our regions with cusps, where the formula for $C_2$ does simplify.

Proofs of the above two theorems are given in Section~\ref{SECT proofs 1st 2nd order}. In the sections below we present applications of these results.

\begin{remark}[Examples of incident waves] \label{REM plane Herglotz waves}
\normalfont
\mbox{}

\n There are two important classes of incident waves:

\begin{enumerate}
\item[$\bullet$] Plane waves $u^{\text{in}}(x) = e^{ikx \cdot \eta}$, where $\eta \in \RR^2$ is a unit vector; or a linear combination of plane waves with different direction vectors $\eta$.

\item[$\bullet$] Herglotz waves -- ``continuous" superposition of plane waves, {\it i.e.},

\begin{equation} \label{Herglotz}
u^{\text{in}}(x) = \int_{\mathbb{S}^1} \psi(\eta) e^{ikx \cdot \eta} ds(\eta),
\end{equation}

\n where $\mathbb{S}^1$ denotes the unit circle centered at the origin and $\psi \in L^2(\mathbb{S}^1)$ is called the density function. Note that a plane wave formally corresponds to a Herglotz wave with a Dirac delta as its density function.

\end{enumerate}

\n For such incident waves the assumption (H5) is automatically satisfied, as $u^{\text{in}}(x)$ extends to an entire function of $x \in \CC^2$. 
\end{remark}

\begin{remark} (Plane waves always scatter)
\normalfont

\n Plane waves never vanish, therefore they satisfy \eqref{ND 1}, provided $x_2'(t_0) \neq 0$. In particular, plane waves always scatter from an elliptical region and the nonconvex region of Section~\ref{SECT nonconvex}. Moreover, plane waves satisfy the second-order condition $C_2 \neq 0$ for the cusped regions considered in Section~\ref{SECT cusp}, and thus always scatter from those as well. Finally, they also always scatter from regions with corners considered in Section~\ref{SEC corner}. 
\end{remark}

\subsection{Application: true ellipse} \label{SECT ellipse} 

Let $D$ be the region bounded by the true ellipse

\begin{equation} \label{Gamma ellipse}
x(t) = \left(a \cos t, b \sin t \right), \qquad \qquad t \in [-\pi, \pi].
\end{equation}

\n where, without loss of generality we may assume $a>b>0$. In particular the boundary $\Gamma = \partial D$ is not a circle. The saddle point equation $g'(t) = x_1'(t) + i x_2'(t)= 0$ reads

\begin{equation*}
\tan t = i \frac{b}{a} \qquad \Longleftrightarrow \qquad t = t_0 + \pi n, \quad n \in \ZZ,
\end{equation*}

\n where

\begin{equation*}
t_0 = i \tanh^{-1} \frac{b}{a}.
\end{equation*}

\n Note that for the circle $b=a$, and there are no solutions to the above equation as formally  $t_0 = i \infty$. Straightforward calculations show that

\begin{equation*}
x(t_0) = \frac{1}{\sqrt{a^2 - b^2}} \left(a^2, i b^2 \right)
\qquad \text{and} \qquad x'(t_0) = \frac{ab}{\sqrt{a^2 - b^2}} (-i,1).
\end{equation*}

\n Clearly $t_0$ is a simple critical point as 

\begin{equation*}
g(t_0) = \sqrt{a^2 - b^2} \qquad \text{and} \qquad g''(t_0) = -g(t_0) \neq 0.
\end{equation*}

\n With $t = r + i s$,

\begin{equation*}
\Re g(t) = \cos r \left( a \cosh s - b \sinh s \right). 
\end{equation*}

\begin{figure}[H]
    \centering
    \includegraphics[width=0.49\linewidth]{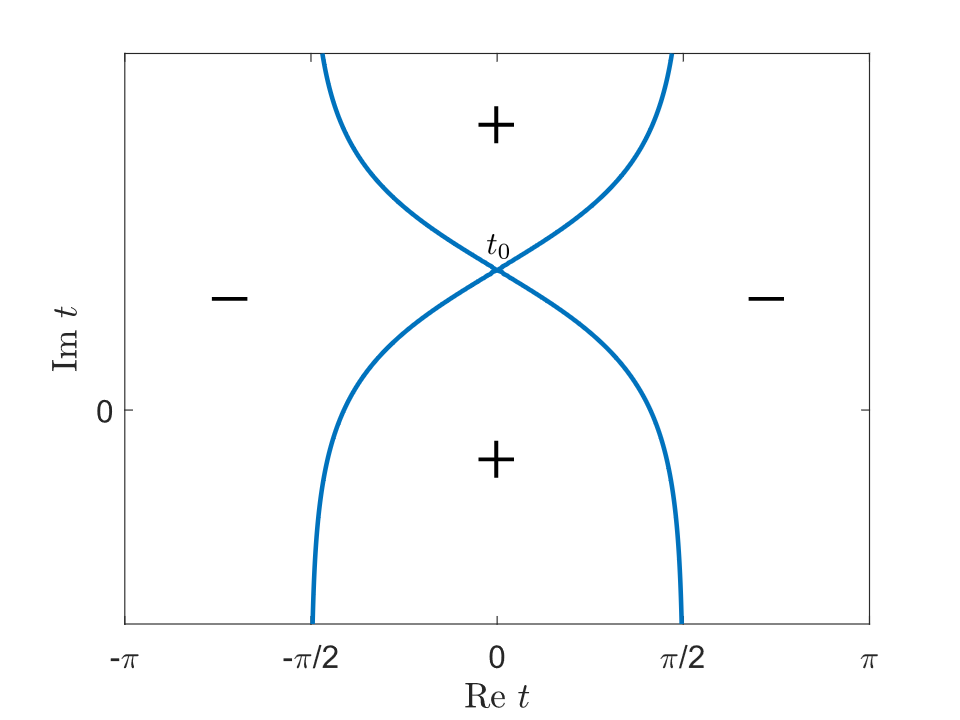}
    \includegraphics[width=0.495\linewidth]{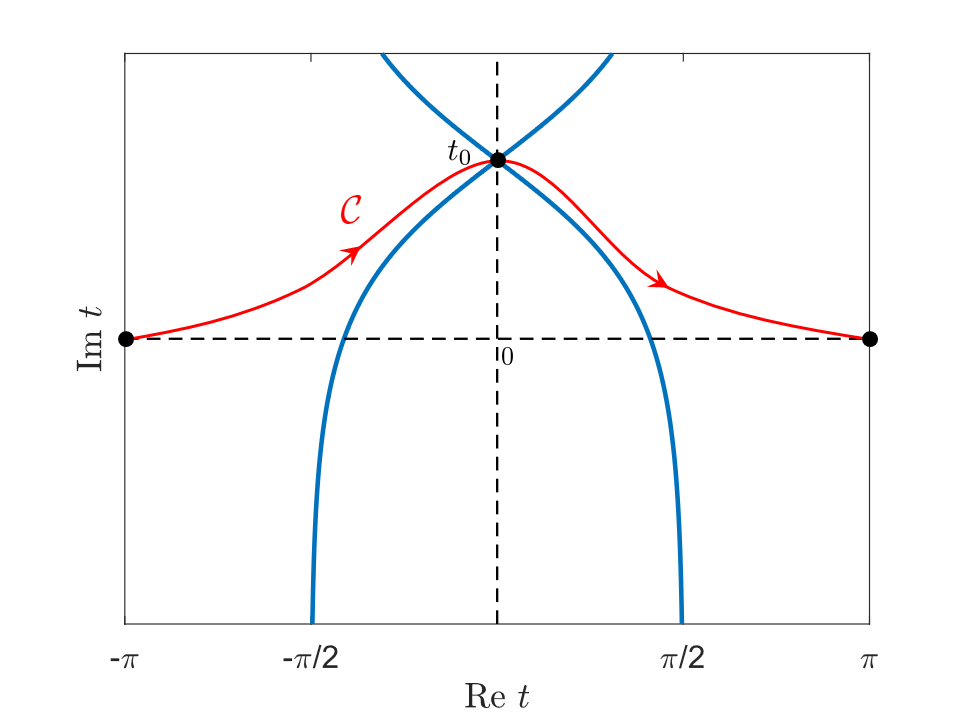}
    \caption{\textbf{Left:} The curve $\Re g(t) = \Re g(t_0)$, with ``$-$" (resp. ``$+$") marks indicating the region where $\Re g(t) < \Re g(t_0)$ (resp. $\Re g(t) > \Re g(t_0)$). \textbf{Right:} The contour $\PC$ lying in the ``$-$" region except at the point $t_0$.}
    \label{FIG el}
\end{figure}

\n The key is to find a contour $\PC$ that satisfies assumption (H4) of Section~\ref{SECT gen}. To that end, in Figure~\ref{FIG el}, we plot the curves $\Re g(t) = \Re g(t_0)$ and mark the region where $\Re g(t) < \Re g(t_0)$ with a ``$-$". The goal is to connect $-\pi$ to $\pi$ while remaining entirely inside the ``$-$" region, except at the crossing point $t_0$, which is a saddle point for $\Re g$. As shown in the right-hand image of Figure~\ref{FIG el}, this is clearly possible.

Regarding assumption (H5), let us describe the smallest region of analyticity that makes Theorem~\ref{THM gen} applicable. As we always have analyticity in an open set around $\RR$, we just need to concentrate on the region $\CT_0$ defined to be the closure of the domain lying above the $\Re t$ axis and below the curve $\Re g(t) = \Re g(t_0)$. Explicitly,

\begin{equation} \label{T0 ellipse}
\CT_0 = \left\{ r + is \ : \ |r| \leq \arccos \sqrt{1 - \frac{b^2}{a^2}}, \qquad 0\leq s \leq \ln \left( \frac{1 - |\sin r|}{\cos r} \sqrt{\frac{a+b}{a-b}} \right) \right\}.
\end{equation}

\n Finally, as $x_2'(t_0) \neq 0$, this factor can be dropped from the nondegeneracy condition \eqref{ND 1}. Theorem~\ref{THM gen} immediately implies:

\begin{corollary} \label{CORO ellipse}
Let $D$ be the region enclosed by the ellipse \eqref{Gamma ellipse} (with $a>b>0$) having a constant refractive index $0<q \neq 1$. Let $u^{\text{in}}$ be an incident wave that extends to an analytic function of two complex variables in an open set of $\CC^2$ containing $\left\{(x_1(t), x_2(t)): t \in \CT_0 \right\}$. Assume that   

\begin{equation} \label{ND ellipse}
u^{\text{in}} \left( \frac{a^2}{\sqrt{a^2 - b^2}}, \frac{i b^2}{\sqrt{a^2 - b^2}} \right) \neq 0,
\end{equation}

\n then $u^{\text{in}}$ is scattered by $D$. 

\end{corollary}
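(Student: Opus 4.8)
The plan is to verify that the elliptical region satisfies all of the hypotheses (H1)--(H5) of Theorem~\ref{THM gen}, since the Corollary then follows at once from the asymptotic formula \eqref{I C1 asymp}. Hypotheses (H1) and (H2) cost essentially nothing: the components $x_1(t)=a\cos t$ and $x_2(t)=b\sin t$ are entire, so they extend analytically to all of $\CC$ and we are free to pin down the domain $\CT$ later; the curve $\Gamma$ is simple and closed (as $a>b>0$), and $0<q\neq 1$ is assumed. Hypothesis (H5) is built into the statement of the Corollary, once $\CT$ is chosen so that $\{x(t):t\in\CT\}$ lies inside the open set on which $u^{\text{in}}$ is assumed analytic.

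For (H3) I would invoke the critical-point computation recorded above: $g'(t)=-a\sin t+ib\cos t$ vanishes at $t_0=i\tanh^{-1}(b/a)$ (modulo $\pi\ZZ$), which is purely imaginary and nonzero, hence $t_0\neq\pm\pi$, and $g''(t_0)=-g(t_0)=-\sqrt{a^2-b^2}\neq 0$, so $t_0$ is a simple critical point. The second thing I would extract here is that $x'(t_0)=\tfrac{ab}{\sqrt{a^2-b^2}}(-i,1)$, so $x_2'(t_0)=\tfrac{ab}{\sqrt{a^2-b^2}}\neq 0$. This is exactly what lets me drop the factor $x_2'(t_0)$ from the leading coefficient $C_1$ of \eqref{I C1 asymp}: the nondegeneracy condition \eqref{ND 1} collapses to $u^{\text{in}}(x(t_0))\neq 0$, and inserting the computed value $x(t_0)=\tfrac{1}{\sqrt{a^2-b^2}}(a^2,ib^2)$ turns this into precisely \eqref{ND ellipse}.

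The main obstacle is (H4): exhibiting a contour $\PC\subset\CT$ from $-\pi$ to $\pi$ through $t_0$ along which $\Re g$ is strictly maximized at $t_0$. Writing $t=r+is$ and using $\Re g(t)=\cos r\,(a\cosh s-b\sinh s)$, I note first that the real segment $[-\pi,\pi]$ will not serve: on the real axis $\Re g=a\cos r$ attains $a>\sqrt{a^2-b^2}=\Re g(t_0)$ near $r=0$, so the interval is not contained in $\{\Re g<\Re g(t_0)\}$. The remedy is to deform the contour upward through the saddle $t_0$. Since $g''(t_0)=-\sqrt{a^2-b^2}$ is real and negative, the local expansion $\Re g-\Re g(t_0)\approx-\tfrac12\sqrt{a^2-b^2}\,\Re((t-t_0)^2)$ shows that the steepest-descent directions at $t_0$ are horizontal, so $\PC$ should leave $t_0$ along $\pm$(real direction) into the ``$-$'' region. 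I would then trace the level curve $\Re g(t)=\sqrt{a^2-b^2}$, which meets the real axis at $r=\pm\arccos\sqrt{1-b^2/a^2}$, and connect $-\pi$ to $\pi$ by a path that descends from $t_0$ into the ``$-$'' region on either side and runs out to the endpoints (where $\Re g=-a<\sqrt{a^2-b^2}$); this is exactly the contour of Figure~\ref{FIG el}. Making it rigorous amounts to describing the four sectors of the level set at the saddle and checking the sign of $\Re g-\Re g(t_0)$ along the chosen path.

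Finally, for the deformation of $[-\pi,\pi]$ into $\PC$ to preserve the integral $I(\lambda)$, the integrand must be analytic on the region swept out, which is precisely the cap $\CT_0$ of \eqref{T0 ellipse} lying between the real axis and the level curve. As $x$ is entire, the only constraint is analyticity of $u^{\text{in}}\circ x$ there, which is the content of (H5); I would therefore take $\CT$ to be an open neighborhood of $[-\pi,\pi]\cup\CT_0$ contained in the analyticity set of $u^{\text{in}}$, so that (H1)--(H5) hold simultaneously. Theorem~\ref{THM gen} then yields $I(\lambda)=\lambda^{-3/2}e^{\lambda g(t_0)}[C_1+O(\lambda^{-1})]$ with $C_1$ a nonzero multiple of $u^{\text{in}}(x(t_0))$; the branch ambiguity in $(-g''(t_0))^{1/2}$ flagged in Remark~\ref{REM branch} is immaterial, as it affects only the (nonzero) phase of $C_1$, not its vanishing. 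Hence under \eqref{ND ellipse} we get $C_1\neq 0$, so $I(\lambda)\not\equiv 0$, contradicting the nonscattering identity $I(\lambda)\equiv 0$, and $u^{\text{in}}$ must scatter.
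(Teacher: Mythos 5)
Your proposal is correct and follows essentially the same route as the paper: verify (H1)--(H5) for the ellipse by computing the simple saddle $t_0=i\tanh^{-1}(b/a)$ with $g''(t_0)=-\sqrt{a^2-b^2}\neq 0$ and $x_2'(t_0)\neq 0$, exhibit the deformed contour $\PC$ inside the region $\Re g<\Re g(t_0)$ (the paper does this graphically in Figure~\ref{FIG el}, you additionally sketch the local saddle analysis), take $\CT$ to be a neighborhood of $[-\pi,\pi]\cup\CT_0$, and apply Theorem~\ref{THM gen} so that \eqref{ND 1} reduces to \eqref{ND ellipse}. No gaps worth flagging.
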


Consider the Herglotz wave \eqref{Herglotz}. If we parametrize $\eta = (\cos \alpha, \sin \alpha)$, then the density function $\psi$ can be viewed as a function of $\alpha$. In other words,

\begin{equation} \label{Herglotz with tau}
u^{\text{in}}(x) = \int_{-\pi}^{\pi} \psi(\alpha) e^{ikx \cdot (\cos \alpha, \sin \alpha)} d\alpha.
\end{equation}

\n Let us now expand $\psi$ into its Fourier series

\begin{equation*}
\psi(\alpha) = \sum_{n \in \ZZ} \psi_n e^{in\alpha},
\end{equation*}

\n where $\psi_n \in \CC$ denote the Fourier coefficients. Substituting the latter into \eqref{Herglotz with tau} gives

\begin{equation} \label{u^in with H_n}
u^{\text{in}}(x) = \sum_{n \in \ZZ} \psi_n h_n(x),
\end{equation}

\n where $h_n$ denotes the Herglotz wave with density $e^{in\alpha}$, {\it i.e.},

\begin{equation} \label{H_n}
h_n(x) = \int_{-\pi}^{\pi} e^{in\alpha} e^{ikx \cdot (\cos \alpha, \sin \alpha)} d\alpha = 2\pi i^n e^{in \theta} J_n(kr),
\end{equation}

\n where $r, \theta$ denote the polar coordinates of $x$, $J_n$ denotes the Bessel function of order $n$, and the last equation is a direct consequence of a well-known integral representation of $J_n$. The formula \eqref{u^in with H_n} shows the importance of the Herglotz wave functions $h_n$. These functions are known to be nonscattering for the disk for particular choices of $k$ (depending on $q, n$ and the radius of the disk) \cite{CK} (see also Section~\ref{SEC disk}). Let us now study the scattering of these incident waves from the true ellipse $D$. We calculate

\begin{equation*}
h_n \left( \frac{a^2}{\sqrt{a^2 - b^2}}, \frac{i b^2}{\sqrt{a^2 - b^2}} \right) = 2\pi i^n\exp\left\{ -n  \tanh^{-1} \frac{b^2}{a^2} \right\} J_n\left( k \sqrt{a^2+b^2} \right).
\end{equation*}

\n Consequently, the nondegeneracy condition \eqref{ND ellipse} is equivalent to

\begin{equation} \label{J_n ellipse}
J_n \left( k \sqrt{a^2 + b^2} \right) \neq 0.
\end{equation}

\n Clearly, for a fixed ellipse there are infinitely many $k>0$ for which the above condition fails. Suppose $k>0$ is such that $J_n \left( k \sqrt{a^2 + b^2} \right) = 0$. We can consider the second-order nondegenarcy condition of Theorem~\ref{THM 2nd order}. Calculations (performed with the maple-scripts found at \cite{MAPL}) show that

\begin{equation*}
C_2 = \frac{3 i^n (2\pi)^\frac{3}{2} (q-1)k^3 ab}{2(a^2+b^2)^\frac{3}{2} (a^2-b^2)^\frac{5}{4}} \left[a^4 - \frac{2}{3}(n-2) a^2b^2 + b^4 \right] e^{-n  \tanh^{-1} \frac{b^2}{a^2}} J_{n+1}\left( k \sqrt{a^2+b^2} \right). 
\end{equation*}

\n Using the fact that $J_n$ and $J_{n+1}$ do not have common zeros, the condition $C_2 = 0$ is equivalent to the expression in square brackets being equal to zero. Equivalently, this holds if $\mu=a^2/b^2$ solves the quadratic equation

\begin{equation} \label{mu quadratic}
\mu^2 - \frac{2}{3}(n-2) \mu + 1 = 0.
\end{equation}

\n It is easy to see that for $n < 5$, the above quadratic equation does not have real solutions and for $n=5$ the unique solution is $\mu=1$ which corresponds to the disk. Consequently, $C_2\neq 0$ and $h_n$ always scatters in the case $n \leq 5$ (no matter what the parameters $q, a$ and $b$ are). If, on the other hand, $n > 5$, then \eqref{mu quadratic} has two positive solutions: one less than 1 and the other greater. Since we assumed that $a>b$, let us consider the solution greater than one:

\begin{equation} \label{mu_n}
\mu_n = \frac{n-2 + \sqrt{(n-5)(n+1)}}{3}.
\end{equation}

\n Then $h_n$ scatters, provided that the ratio $a^2/b^2$ is different from $\mu_n$. We summarize this discussion in the following.

\begin{corollary}
Let $D$ be the region enclosed by the ellipse \eqref{Gamma ellipse} (with $a>b>0$)  having a constant refractive index $0<q \neq 1$. Let $n \in \ZZ$ and $u^{\text{in}} = h_n$ be given by \eqref{H_n}. Then $u^{\text{in}}$ is scattered by $D$ provided one of the following holds: 

\begin{itemize}
\item[$\bullet$] $J_n \left( k \sqrt{a^2 + b^2} \right) \neq 0$, or
\item[$\bullet$] $n \leq 5$, or
\item[$\bullet$] $n > 5$ and $a^2/b^2 \neq \mu_n$, where $\mu_n$ is defined by \eqref{mu_n}.
\end{itemize}

\end{corollary}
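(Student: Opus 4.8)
The plan is to dispatch the three bullets by splitting on whether the first-order nondegeneracy condition holds. For the first bullet I would simply combine the evaluation of $h_n$ at the complex saddle point with Corollary~\ref{CORO ellipse}. Since $h_n$ is a Herglotz wave (its density $e^{in\alpha}$ lies in $L^2(\mathbb{S}^1)$), it extends to an entire function of $x \in \CC^2$ by Remark~\ref{REM plane Herglotz waves}, so the analyticity hypothesis of Corollary~\ref{CORO ellipse} is automatic. The displayed value of $h_n$ at $x(t_0)$ equals a nonvanishing exponential prefactor times $J_n(k\sqrt{a^2+b^2})$; hence the first-order condition \eqref{ND ellipse}, equivalently \eqref{J_n ellipse}, holds precisely when $J_n(k\sqrt{a^2+b^2}) \neq 0$, and Corollary~\ref{CORO ellipse} yields scattering. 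This handles the first bullet.

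For the remaining two bullets I would work in the complementary regime $J_n(k\sqrt{a^2+b^2}) = 0$, where $C_1 = 0$ and Theorem~\ref{THM 2nd order} applies. The essential input is the closed form for $C_2$ recorded above, which factors as a manifestly nonzero prefactor (using $q \neq 1$, $k > 0$, and $a > b > 0$) times the bracket $a^4 - \frac{2}{3}(n-2)a^2b^2 + b^4$ times $J_{n+1}(k\sqrt{a^2+b^2})$. The first step is to invoke the classical fact that consecutive integer-order Bessel functions have no common positive zero -- a consequence of the three-term recurrence together with the non-degeneracy of the fundamental system $(J_m, Y_m)$ -- so that $J_n(k\sqrt{a^2+b^2}) = 0$ forces $J_{n+1}(k\sqrt{a^2+b^2}) \neq 0$. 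Consequently $C_2 = 0$ becomes equivalent to the vanishing of the bracket, i.e. to $\mu = a^2/b^2$ being a root of the quadratic \eqref{mu quadratic}.

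It then remains to decide when $Q(\mu) = \mu^2 - \frac{2}{3}(n-2)\mu + 1$ vanishes in the admissible range $\mu > 1$ (forced by $a > b$). Here I would use that $Q(1) = \frac{2}{3}(5-n)$ together with the location of the vertex at $\mu = \frac{1}{3}(n-2)$: a short sign analysis of this upward parabola shows that $Q$ has a root in $(1,\infty)$ if and only if $Q(1) < 0$, i.e. if and only if $n > 5$. Hence for every $n \leq 5$ the bracket -- and therefore $C_2$ -- is nonzero whenever $a > b$, so Theorem~\ref{THM 2nd order} gives the second bullet. For $n > 5$ the two real roots are reciprocal (their product is $1$), and the unique one exceeding $1$ is $\mu_n$ of \eqref{mu_n}; thus the bracket vanishes only when $a^2/b^2 = \mu_n$, and $C_2 \neq 0$ whenever $a^2/b^2 \neq \mu_n$, so Theorem~\ref{THM 2nd order} gives the third bullet.

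The main obstacle is the explicit evaluation of $C_2$: it requires executing the full second-order steepest-descent formula of Theorem~\ref{THM 2nd order} on the ellipse parametrization together with the Bessel representation \eqref{H_n} of $h_n$, involving up to fourth-order derivatives of $u^{\text{in}}$ along $x(t)$ as well as the derivatives $g''(t_0)$ and $g'''(t_0)$ at the complex saddle. This is the step relegated to the symbolic computation in \cite{MAPL}; its clean factorization into prefactor $\times$ bracket $\times\, J_{n+1}$ is exactly what makes the elementary root analysis above possible, and verifying that factorization by hand -- especially the emergence of $J_{n+1}$ and the cancellation producing the symmetric biquadratic in $a,b$ -- would be the delicate part.
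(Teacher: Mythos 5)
Your proposal is correct and follows essentially the same route as the paper: the first bullet via the saddle-point evaluation of $h_n$ and Corollary~\ref{CORO ellipse}, and the remaining bullets via Theorem~\ref{THM 2nd order}, the symbolically computed factorization of $C_2$, the no-common-zeros property of $J_n, J_{n+1}$, and an elementary root analysis of the quadratic \eqref{mu quadratic}. Your sign analysis of $Q(\mu)$ at $\mu=1$ is in fact slightly more careful than the paper's statement that the quadratic has ``no real solutions'' for $n<5$ (for $n\leq -1$ it does have real roots, but they are negative, hence inadmissible since $\mu = a^2/b^2 > 1$), though the conclusion is identical.
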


We remark that if $n > 5$, $a^2/b^2 = \mu_n$ and $k$ is such that $J_n \left( k \sqrt{a^2 + b^2} \right) = 0$, then $C_1 = C_2 = 0$, {\it  i.e.}, both nondegeneracy conditions fail. To analyze the scattering behavior of $h_n$ in this nongeneric situation, further terms in the asymptotic expansion of $I(\lambda)$ are needed. This is a task for the future.

\subsection{Application: regular nonconvex region} \label{SECT nonconvex}

The example here is borrowed from \cite{GS91}. Consider the nonconvex region $D$ (see Figure~\ref{FIG ncv}) bounded by the curve 

\begin{equation} \label{ncv}
x(t) = (2 + \cos 2t) \left( \cos t, \sin t \right), \qquad \qquad t \in [-\pi, \pi].
\end{equation}

\begin{figure}[H]
    \centering
    \includegraphics[width=0.49\linewidth]{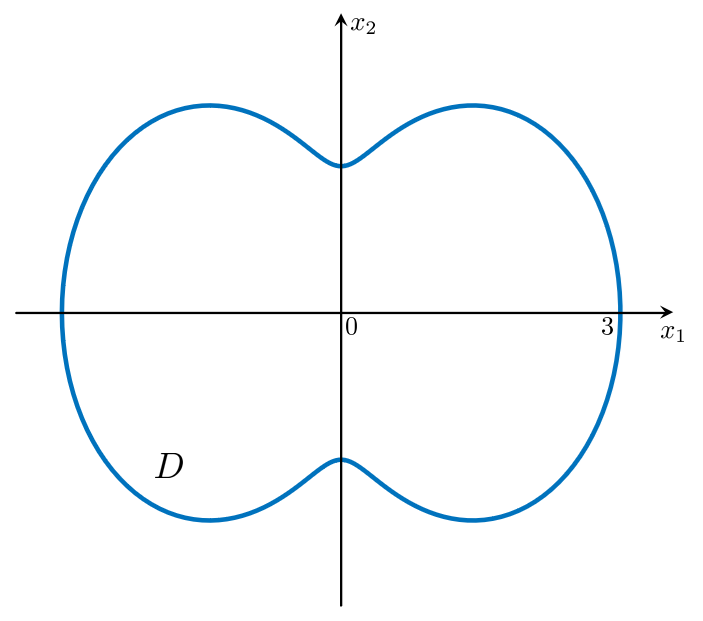}
    \caption{Region $D$ bounded by \eqref{ncv}.}
    \label{FIG ncv}
\end{figure}

\n Now, $g(t) = (2 + \cos 2t) e^{it}$ and direct calculation shows that

\begin{equation} \label{t0 ncv}
t_0 = \frac{i}{2} \ln (2+\sqrt{7})
\end{equation}

\n is a simple critical point and that $x_2'(t_0) \neq 0$. Figure~\ref{FIG ncv1} shows a contour deformation satisfying (H4). As a consequence we obtain the following result, which  for simpliciy we state only for the incident waves described in Remark~\ref{REM plane Herglotz waves},

\begin{corollary}
Let $D$ be the region enclosed by the curve \eqref{ncv}, having a constant refractive index $0<q \neq 1$. Let $u^{\text{in}}$ be a finite linear combination of plane waves or a Herglotz wave such that

\begin{equation*}
u^{\text{in}}(x(t_0)) \neq 0,
\end{equation*}

\n where $t_0$ is given by \eqref{t0 ncv}, then $u^{\text{in}}$ is scattered by $D$.
\end{corollary}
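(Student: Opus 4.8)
The plan is to verify that the nonconvex region \eqref{ncv} satisfies hypotheses (H1)--(H5) of Theorem~\ref{THM gen}, thereby reducing the corollary to a direct application of that theorem together with the observation that plane waves and Herglotz waves are entire (Remark~\ref{REM plane Herglotz waves}). Hypotheses (H1) and (H2) are immediate: the parametrization $x(t) = (2+\cos 2t)(\cos t, \sin t)$ is built from trigonometric functions, which extend to entire functions of $t \in \CC$, and the refractive index assumption is part of the hypothesis. Hypothesis (H5) holds automatically for the incident waves in question, since by Remark~\ref{REM plane Herglotz waves} any finite linear combination of plane waves, or a Herglotz wave, extends to an entire function of $x \in \CC^2$. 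Thus the substantive work is confined to verifying (H3) and (H4).

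For (H3), I would write $g(t) = (2+\cos 2t)e^{it}$ and compute $g'(t)$ explicitly, solving $g'(t)=0$ to confirm that $t_0 = \tfrac{i}{2}\ln(2+\sqrt 7)$ is a critical point lying in $\CT$ and that it is simple, i.e. $g''(t_0)\neq 0$. This is a direct (if slightly tedious) computation: setting $z = e^{it}$ turns the saddle-point equation into a polynomial condition in $z$, whose relevant root corresponds to the stated value of $t_0$. The same computation should yield $x_2'(t_0)\neq 0$, which is asserted in the text and which is what allows the factor $x_2'(t_0)$ to be dropped from the nondegeneracy condition \eqref{ND 1}.

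The main obstacle is (H4): exhibiting a contour $\PC$ in $\CT$ joining $-\pi$ to $\pi$ through $t_0$ on which $\Re g(t) < \Re g(t_0)$ away from $t_0$. Unlike the elliptical case, here the geometry of the level set $\Re g(t) = \Re g(t_0)$ is more intricate, and one must verify both that such a descent contour exists and that it stays within a region where $x(t)$ (and hence the analytic continuation of $u^{\text{in}}\circ x$) remains analytic. I would establish this by analyzing the real part $\Re g(r+is) = (2+\cos 2r\cosh 2s)\cos r\cosh s - \sin 2r \sinh 2s\,\sin r \sinh s$ (after expanding $\cos 2t$ and $e^{it}$ with $t=r+is$), identifying the sign of $\Re g - \Re g(t_0)$ near the saddle, and invoking the level-set picture in Figure~\ref{FIG ncv1} to trace an admissible path. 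Since $t_0$ is a simple saddle of $\Re g$, the steepest-descent directions through $t_0$ are well defined, and the local descent structure guarantees that the contour can be threaded through $t_0$ into the ``$-$'' region on either side; the global connectivity to $\pm\pi$ is then read off from the figure.

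Once (H1)--(H5) are in place, Theorem~\ref{THM gen} gives the asymptotic expansion \eqref{I C1 asymp} with leading coefficient $C_1$ proportional to $(q-1)\,u^{\text{in}}(x(t_0))\,x_2'(t_0)$. Since $q\neq 1$ and $x_2'(t_0)\neq 0$, the nondegeneracy condition \eqref{ND 1} reduces precisely to $u^{\text{in}}(x(t_0))\neq 0$. Under this hypothesis $C_1\neq 0$, so $I(\lambda)\not\equiv 0$, contradicting the necessary condition $I(\lambda)=0$ for all $\lambda>0$ derived in Section~\ref{SECT Prelim}. Hence $u^{\text{in}}$ is scattered by $D$, completing the proof.
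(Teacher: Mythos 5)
Your proposal takes exactly the paper's route: the paper proves this corollary by checking (H1)--(H5) for $g(t)=(2+\cos 2t)e^{it}$ --- a direct calculation showing $t_0=\tfrac{i}{2}\ln(2+\sqrt 7)$ is a simple critical point with $x_2'(t_0)\neq 0$, the observation that (H5) is automatic for finite combinations of plane waves and Herglotz waves (Remark~\ref{REM plane Herglotz waves}), and the contour in Figure~\ref{FIG ncv1} for (H4) --- and then applies Theorem~\ref{THM gen} and drops the nonzero factor $x_2'(t_0)$ from \eqref{ND 1}.

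One concrete correction to your (H4) step, since the entire hypothesis is a statement about the level sets of $\Re g$: the expansion you display is not $\Re g$. The expression $(2+\cos 2r\cosh 2s)\cos r\cosh s-\sin 2r\sinh 2s\,\sin r\sinh s$ is the real part of the analytic continuation of $x_1(t)=(2+\cos 2t)\cos t$; but off the real axis $x_2(t)$ is no longer real-valued, so $\Re g=\Re x_1-\Im x_2\neq \Re x_1$. The correct expression, read off from $g(t)=(2+\cos 2t)e^{it}$ with $t=r+is$, is
\begin{equation*}
\Re g(r+is)=e^{-s}\left[(2+\cos 2r\cosh 2s)\cos r+\sin 2r\sinh 2s\,\sin r\right],
\end{equation*}
which agrees with your formula only when $s=0$. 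Tracing sign regions and a descent contour with your formula would produce the wrong level-set picture, and a contour validated against it need not satisfy \eqref{Re g < Re g t_0}. With the corrected formula the picture is the one in Figure~\ref{FIG ncv1}, and the remainder of your argument (local steepest-descent directions at the simple saddle plus global connectivity to $\pm\pi$) goes through as you describe.
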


\begin{figure}[H]
    \centering
    \includegraphics[width=0.49\linewidth]{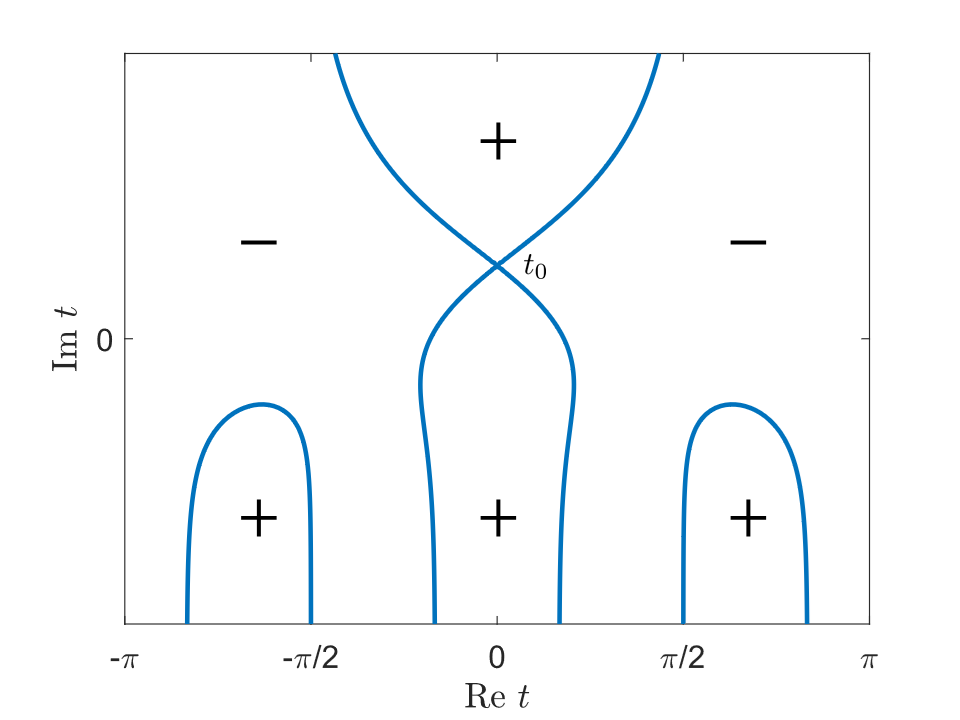}
    \includegraphics[width=0.495\linewidth]{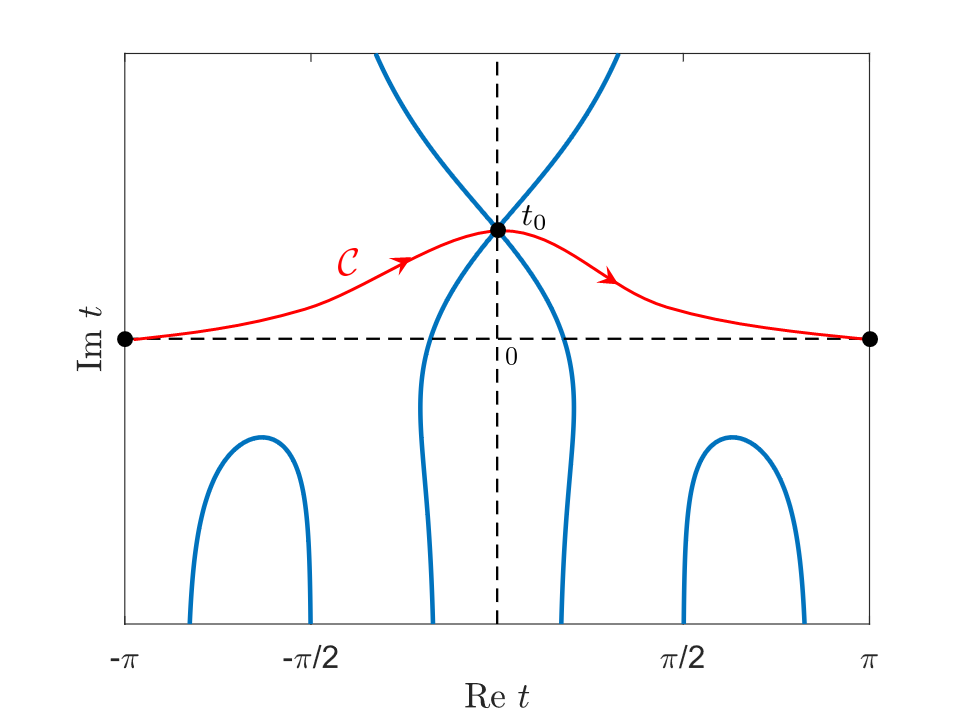}
    \caption{\textbf{Left:} The curve $\Re g(t) = \Re g(t_0)$, with ``$-$" (resp. ``$+$") marks indicating the region where $\Re g(t) < \Re g(t_0)$ (resp. $\Re g(t) > \Re g(t_0)$). \textbf{Right:} The contour $\PC$ lying in the ``$-$" region except at the point $t_0$.}
    \label{FIG ncv1}
\end{figure}

\subsection{Application: curves with cusps} \label{SECT cusp}

Before presenting our two examples, let us make a connection with two results from \cite{SS21, SS24}. In Theorem 1.4 of \cite{SS21} the authors prove that if a given region (k-quadrature domain) $D$ admits a nonscattering incident wave that is nonvanishing at a point $x_0 \in \partial D$ then there are two possibilities: either 

\begin{enumerate}
\item[(a)] $D$ is regular near $x_0$, or 

\item[(b)] The complement of $D$ is thin near $x_0$, {\it e.g.}, $D$ has an inward cusp at $x_0$ (for precise definition see \cite{SS21}). 
\end{enumerate}

\n In particular, a region with an outward cusp does not fall in  either of the above categories and therefore will always scatter incident waves that are nonvanishing at the cusp. In Section~\ref{SECT delt} we consider a region bounded by a deltoid curve -- which has an outward cusp -- and reestablish the always-scattering result (under a nonvanishing condition) for this specific case. 

More interestingly, in Section~\ref{SECT card} we consider a region bounded by a cardioid curve, which has an inward cusp. Such a region falls into category (b) above, and therefore nonscattering from it is not excluded by the result of \cite{SS21}. We establish a new result that rules out nonscattering from this particular region with an inward cusp. Specifically, we show that under a certain analyticity assumption on the incident wave -- which holds for physically relevant waves, such as linear combinations of plane waves or Herglotz waves (cf. Remark~\ref{REM plane Herglotz waves}) -- the region bounded by a cardioid always scatters such a wave, provided it is nonvanishing at the cusp. We believe that this is a general feature of inward cusps and that, similar to outward cusps, they almost always scatter. More specifically, we believe that case (b) in the above result of \cite{SS21} can be excluded (at least for physically relevant incident waves and a constant refractive index). 

In relation to our result, we mention Corollary 1.9 of \cite{SS24} (see also Remark 2.4 in \cite{SS21}), where the authors consider a region $D$ with an inward cusp (a $k$-quadrature domain to be precise) and assume the existence of an incident wave satisfying

\begin{equation} \label{u^in pos}
u^{\text{in}} > 0 \qquad \text{on} \ \partial D.
\end{equation}

\n Then they show that there exists a refractive index function $q \in L^\infty(D)$ such that $u^{\text{in}}$ does not scatter from $D$. The existence of an incident wave satisfying \eqref{u^in pos} is a nontrivial question. Such a wave exists provided $D$ is contained in a disk of radius less than $k^{-1} j_{0,1}$, where $j_{0,1}$ is the first positive zero of the Bessel function $J_0$ \cite{SS21}, however, the existence of such an incident wave is unclear for general cusped domains. For noncircular domains, physical waves may generically fail to satisfy \eqref{u^in pos}.

\subsubsection{Inward cusp} \label{SECT card}

Let $D$ be the region bounded by the cardioid (see Figure~\ref{FIG card}) 

\begin{equation} \label{card}
x(t) = (1 - \cos t) \left(\cos t, \sin t \right), \qquad \qquad t \in [-\pi, \pi].
\end{equation}

\n We may rewrite

\begin{equation*}
g(t)=x_1(t) + ix_2(t) = -\frac{1}{2} \left( e^{it} - 1 \right)^2.
\end{equation*}

\n In particular $t_0 = 0$, which corresponds to the cusp, is a simple saddle point, as $g'(0)=0$ and $g''(0) =1 \neq 0$. Although $t_0$ is an interior point of the interval $[-\pi,\pi]$, $\Re g$ is not maximized there (see Figure~\ref{FIG card}) and therefore we will need to deform this interval to a contour $\PC$ on which $t_0$ is a maximizer of $\Re g$, {\it i.e.}, assumption (H4) is satisfied. With $t=r+is$, direct simplification gives

\begin{figure}[H]
    \centering
    \includegraphics[width=0.49\linewidth]{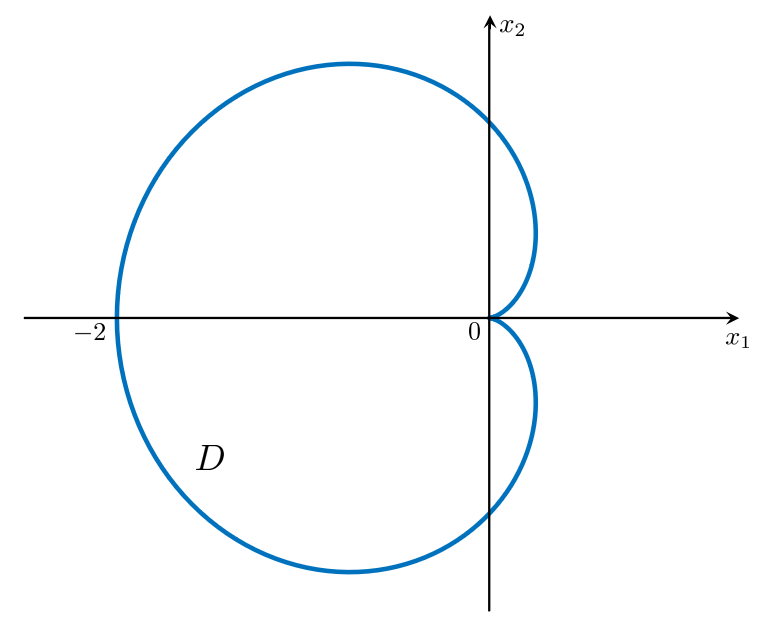}
    \includegraphics[width=0.495\linewidth]{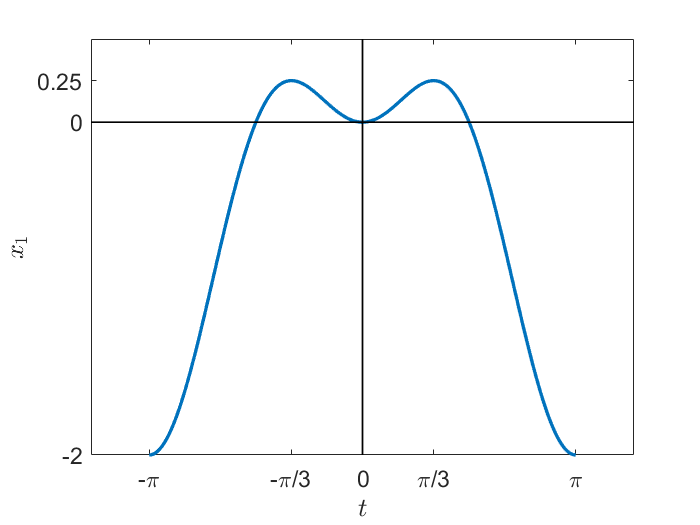}
    \caption{\textbf{Left:} Region $D$ bounded by the cardioid. \textbf{Right:} The plot of $\Re g$.}
    \label{FIG card}
\end{figure}

\begin{equation*}
\Re g(t) = e^{-s} \cos r  - \frac{1}{2} e^{-2s} \cos 2r  - \frac{1}{2}.
\end{equation*}

\n Note that $\Re g(0) = 0$ and our goal is to find a contour $\PC$ that joins $-\pi$ to $\pi$ and passes through $0$ such that $\Re g(t) < 0$ everywhere on $\PC \backslash \{0\}$. To that end, in Figure~\ref{FIG card 2} we plot the curve $\Re g(t) = 0$, which can be equivalently written as 

\begin{equation*}
s = \ln \left( \cos r \pm |\sin r| \right).
\end{equation*}

\n We mark the region where $\Re g(t) < 0$ with a ``$-$". The right-hand image of Figure~\ref{FIG card 2} shows the desired contour $\PC$. 

\begin{figure}[H]
    \centering
    \includegraphics[width=0.49\linewidth]{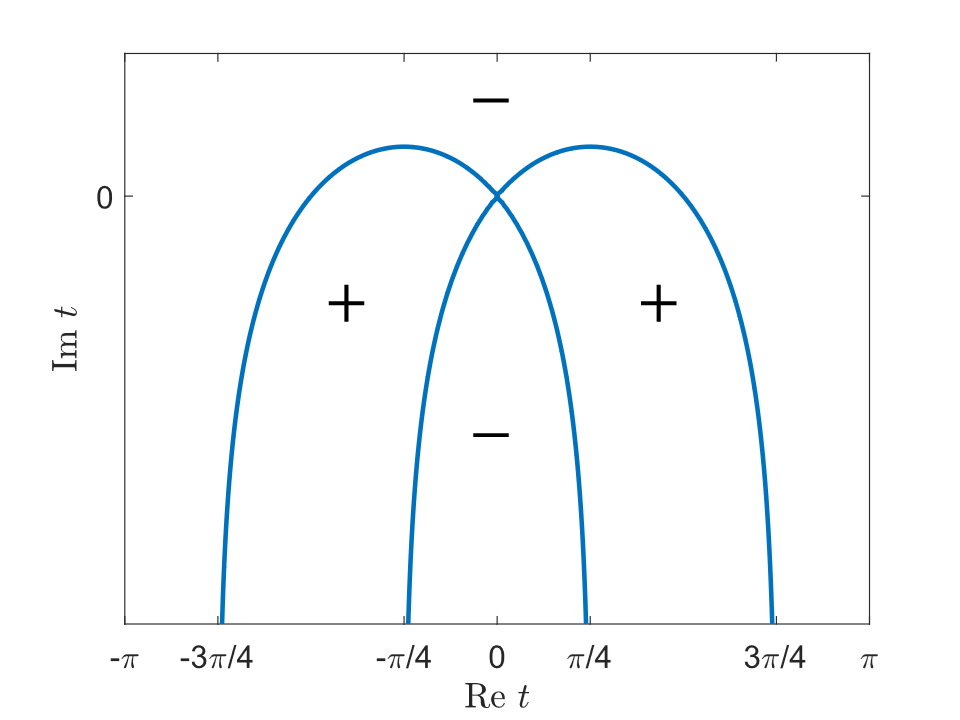}
    \includegraphics[width=0.495\linewidth]{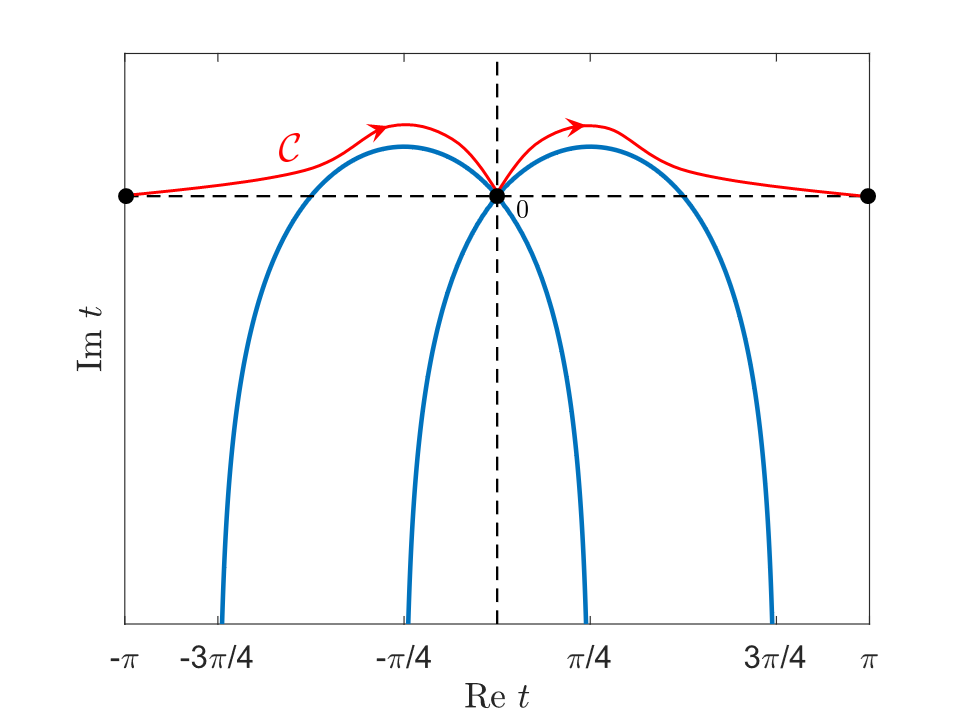}
    \caption{\textbf{Left:} The curve $\Re g(t) = 0$, with ``$-$" (resp. ``$+$") marks indicating the region where $\Re g(t) < 0$ (resp. $\Re g(t) > 0$). \textbf{Right:} The contour $\PC$ lying in the ``$-$" region except at the point $0$.}
    \label{FIG card 2}
\end{figure}

\n To make the contour deformation from $[-\pi,\pi]$ to $\PC$ possible, it remains to impose analyticity in the closure of the domain lying above the $\Re t$ axis and below the curve $\Re g(t) = 0$. Explicitly,

\begin{equation} \label{T0 card}
\CT_0 = \left\{ r + is \ : \ |r| \leq \frac{\pi}{2}, \qquad 0\leq s \leq \ln \left( \cos r + |\sin r| \right) \right\}.
\end{equation}

\n Note that $t_0 = 0$ corresponds to the cusp $x(0) = (0,0)$ and $x_2'(0) = 0$. Consequently, as already discussed, the nondegeneracy condition \eqref{ND 1} fails and Theorem~\ref{THM gen} does not apply. Therefore, we apply Theorem~\ref{THM 2nd order}. Calculations (performed with the maple-scripts found at \cite{MAPL}) give that

\begin{equation*}
C_2 = 3i \sqrt{\frac{\pi}{2}} k^2 (q-1) u^{\text{in}}(0).
\end{equation*}

\n Now Theorem~\ref{THM 2nd order} readily implies:

\begin{corollary} \label{CORO card}
Let $D$ be the region enclosed by the cardioid \eqref{card}, having a constant refractive index $0<q \neq 1$. Let $u^{\text{in}}$ be an incident wave that extends to an analytic function of two complex variables in an open set of $\CC^2$ containing $\left\{(x_1(t), x_2(t)): t \in \CT_0 \right\}$. Assume that   

\begin{equation} \label{ND card}
u^{\text{in}} (0) \neq 0,
\end{equation}

\n then $u^{\text{in}}$ is scattered by $D$. 

\end{corollary}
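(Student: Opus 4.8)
The plan is to read the statement as a direct application of Theorem~\ref{THM 2nd order}: if $u^{\text{in}}$ were nonscattering for the cardioid, then $I(\lambda)\equiv 0$ for all $\lambda>0$, whereas the theorem produces an asymptotic expansion of $I(\lambda)$ whose leading coefficient I will show is nonzero. Thus the whole task reduces to verifying hypotheses (H1)--(H5) for the region \eqref{card} and then evaluating the relevant coefficient.

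First I would dispatch the hypotheses. The parametrization \eqref{card} is entire, giving (H1); (H2) is assumed; and since $g(t)=-\tfrac12(e^{it}-1)^2$ satisfies $g'(0)=0$, $g''(0)=1\neq 0$, the point $t_0=0$ is a simple saddle, giving (H3). For (H4) I would use the explicit expression $\Re g(t)=e^{-s}\cos r-\tfrac12 e^{-2s}\cos 2r-\tfrac12$ with $t=r+is$: the level set $\Re g=0$ is $s=\ln(\cos r\pm|\sin r|)$, and the local model $g\approx\tfrac12 t^2$ shows that near $0$ the set $\{\Re g<0\}$ contains the vertical cone $|s|>|r|$; this lets one route a contour $\PC$ from $-\pi$ to $\pi$ through $0$ lying in $\{\Re g<0\}$ off the saddle, as drawn in Figure~\ref{FIG card 2}. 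The set $\CT_0$ of \eqref{T0 card} is precisely the region of analyticity needed to legitimize this deformation, so (H5) is the stated hypothesis on $u^{\text{in}}$. The crucial structural observation is that the saddle $t_0=0$ is the cusp, where the tangent vanishes: $x'(0)=0$, and in particular $x_2'(0)=0$. Hence the first-order condition \eqref{ND 1} fails, $C_1=0$, and Theorem~\ref{THM gen} gives no information; the decision rests entirely on $C_2$.

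The technical heart is the evaluation of $C_2$. With $g''(0)=1$ and the short computation $g'''(0)=3i$, and since $x_2'(0)=0$ kills the last term of $C_2$, one is left with $C_2=\tfrac{2\sqrt\pi}{(-2)^{3/2}}\big[f''(0)-3i\,f'(0)\big]$, the branch of $(-2)^{3/2}$ being fixed by the slope of $\PC$ at $0$ as in Remark~\ref{REM branch}. The quantities $f'(0)$ and $f''(0)$ involve $v=u^{\text{in}}\circ x$ and $V=(\nabla u^{\text{in}})\circ x$ with their $t$-derivatives, which by the chain rule bring in the gradient, Hessian and higher derivatives of $u^{\text{in}}$ at the origin. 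Two mechanisms then collapse this to a multiple of $u^{\text{in}}(0)$. Because $x'(0)=0$, every term carrying an undifferentiated factor of $x'(0)$ drops out --- for instance $v'(0)=V(0)\cdot x'(0)=0$ and $V'(0)=D^2u^{\text{in}}(0)\,x'(0)=0$ --- leaving only a few surviving terms, expressed through $x''(0)=(1,0)$, $x'''(0)=(0,3)$ and the second partials of $u^{\text{in}}$ at $0$. In the combination $f''(0)-3i\,f'(0)$ the off-diagonal second partials cancel algebraically, while the diagonal ones appear only in the Laplacian $\partial_{11}u^{\text{in}}(0)+\partial_{22}u^{\text{in}}(0)$; invoking the Helmholtz equation $\Delta u^{\text{in}}=-k^2u^{\text{in}}$ replaces this by $-k^2u^{\text{in}}(0)$, all derivative data disappears, and one obtains $f''(0)-3i\,f'(0)=3k^2(q-1)u^{\text{in}}(0)$, hence $C_2=3i\sqrt{\tfrac{\pi}{2}}\,k^2(q-1)u^{\text{in}}(0)$.

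I expect the main obstacle to be exactly this coefficient computation: correctly organizing the chain-rule expansions of $f'$ and $f''$ at the cusp, recognizing which terms vanish by $x'(0)=0$, and seeing the Helmholtz cancellation that removes all dependence on the derivatives of $u^{\text{in}}$ (together with the correct branch, which is what the symbolic computation in \cite{MAPL} certifies). A secondary, more geometric, point is the rigorous justification that the contour $\PC$ of (H4) exists inside $\CT_0$. Granting these, the conclusion is immediate: since $q\neq 1$, the hypothesis \eqref{ND card} that $u^{\text{in}}(0)\neq 0$ forces $C_2\neq 0$, so by \eqref{I next term asymp} we have $I(\lambda)=\lambda^{-5/2}e^{\lambda g(0)}[C_2+O(\lambda^{-1})]\neq 0$ for all large $\lambda$, contradicting $I(\lambda)\equiv 0$. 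Therefore $u^{\text{in}}$ is scattered by $D$.
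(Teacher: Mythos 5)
Your proposal follows essentially the same route as the paper's own proof: it verifies (H1)--(H5) for the cardioid with the contour of Figure~\ref{FIG card 2} and the analyticity region $\CT_0$ of \eqref{T0 card}, notes that $x_2'(0)=0$ forces $C_1=0$ so that only Theorem~\ref{THM 2nd order} can apply, and your hand evaluation $C_2 = 3i\sqrt{\pi/2}\,k^2(q-1)u^{\text{in}}(0)$ (via $g''(0)=1$, $g'''(0)=3i$, $x''(0)=(1,0)$, $x'''(0)=(0,3)$, and the Helmholtz cancellation) agrees exactly with the value the paper obtains from the Maple scripts \cite{MAPL}. The concluding contradiction with $I(\lambda)\equiv 0$ is identical, so this is the paper's proof with the symbolic computation carried out by hand.
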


\subsubsection{Outward cusp} \label{SECT delt}

Let $D$ be the region bounded by the deltoid (see Figure~\ref{FIG delt}) 

\begin{equation} \label{delt}
x(t) = \left( 2 \cos t + \cos 2t, 2 \sin t - \sin 2t \right), \qquad \qquad t \in [-\pi, \pi].
\end{equation}

\begin{figure}[H]
    \centering
    \includegraphics[width=0.49\linewidth]{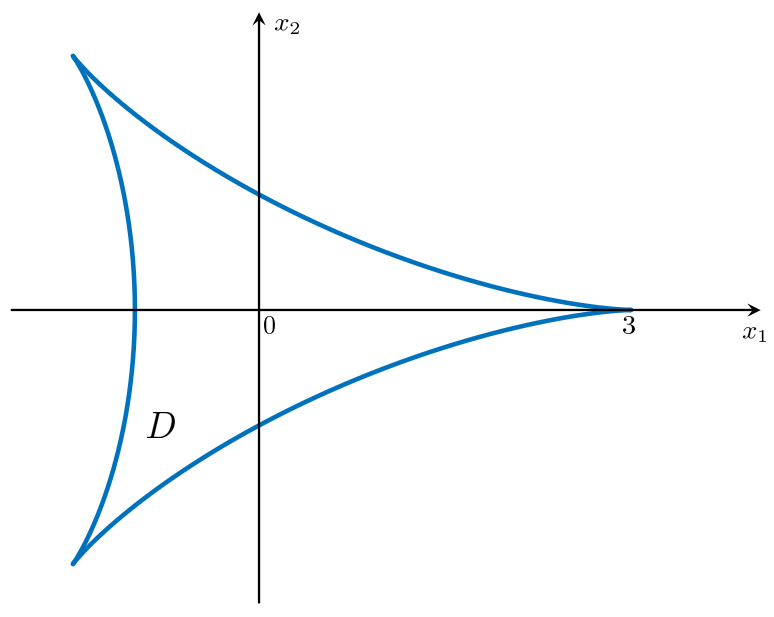}
    \includegraphics[width=0.495\linewidth]{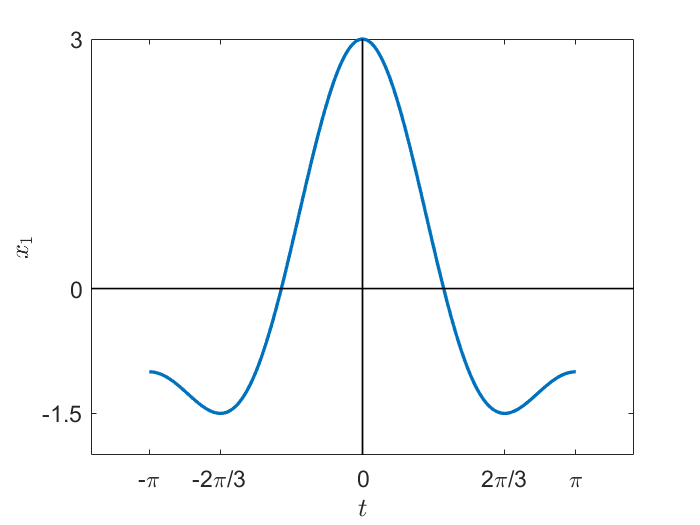}
    \caption{\textbf{Left:} Region $D$ bounded by the deltoid. \textbf{Right:} The plot of $\Re g$.}
    \label{FIG delt}
\end{figure}

\n Note that $\Re g (t) = x_1(t)$, $t \in [-\pi,\pi]$, has a unique maximizer at $t_0=0$ (as can be seen from Figure~\ref{FIG delt}), which also happens to be a simple critical point. Indeed,

\begin{equation*}
g'(t) = 4 \left[i (1-\cos t) - \sin t \right] \left(\frac{1}{2} + \cos t \right)
\end{equation*}

\n and so $g'(0) = 0$, with $g''(0) = -6 \neq 0$. As was already discussed, no contour deformation is necessary in this case, and we can simply take $\PC = [-\pi, \pi]$. In addition, the analyticity assumption (H5) is automatically satisfied. Note that $t_0$ corresponds to the cusp $x(t_0) = (3,0)$ and $x_2'(t_0) = 0$. Consequently, as in the previous case Theorem~\ref{THM gen} does not apply, and we use Theorem~\ref{THM 2nd order}. Calculations (performed with the maple-scripts found at \cite{MAPL}) show that

\begin{equation*}
C_2 = \sqrt{\frac{\pi}{12}} k^2 (q-1) u^{\text{in}}(3,0).
\end{equation*}

\n Therefore, we obtain the following:

\begin{corollary}
Let $D$ be the region enclosed by the deltoid \eqref{delt}, having a constant refractive index $0<q \neq 1$. Let $u^{\text{in}}$ be an incident wave such that

\begin{equation*}
u^{\text{in}}(3,0) \neq 0,
\end{equation*}

\n then $u^{\text{in}}$ is scattered by $D$.

\end{corollary}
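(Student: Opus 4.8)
The plan is to obtain this corollary as a direct application of Theorem~\ref{THM 2nd order}, so the work divides into three parts: verifying the hypotheses (H1)--(H5), confirming that the first-order coefficient $C_1$ vanishes (which forces us into the second-order regime), and finally evaluating $C_2$ and checking that the stated nondegeneracy condition makes it nonzero.

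First I would check (H1)--(H5). Assumption (H1) is immediate: the coordinate functions $x_1(t)=2\cos t+\cos 2t$ and $x_2(t)=2\sin t-\sin 2t$ are trigonometric polynomials, hence extend to entire functions on $\CC$, and the deltoid is a simple closed curve with $x'$ vanishing only at the three cusps $t=0,\pm 2\pi/3$. Assumption (H2) is given. For (H3), I would use the factorization of $g'$ displayed above to read off $g'(0)=0$ and $g''(0)=-6\neq 0$, so that $t_0=0$ is a simple critical point. For (H4), since $\Re g(t)=x_1(t)$, I would locate the real critical points by solving $x_1'(t)=-2\sin t\,(1+2\cos t)=0$ and compare the values $x_1(0)=3$, $x_1(\pm 2\pi/3)=-3/2$, $x_1(\pm\pi)=-1$, concluding that $t_0=0$ is the unique maximizer of $\Re g$ on $[-\pi,\pi]$; hence \eqref{Re g < Re g t_0} holds with the undeformed contour $\PC=[-\pi,\pi]$. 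Finally (H5) is automatic: since no contour deformation is needed and $\PC\subset\RR$, the real-analyticity of $u^{\text{in}}$ in a complex neighborhood of $\RR^2$ (Remark~\ref{REM analyt of u^in}) already supplies the required analyticity along the curve.

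Next I would observe that the cusp sits at $x(0)=(3,0)$ and that $x'(0)=0$; in particular $x_2'(0)=0$, so the first-order condition \eqref{ND 1} fails and $C_1=0$ by \eqref{I C1 asymp}. We are therefore in the setting of Theorem~\ref{THM 2nd order}. In the formula for $C_2$ the last term $-ik^2q\,g''(t_0)x_2'(t_0)\,V(t_0)\cdot(i,1)$ drops out because $x_2'(0)=0$, so $C_2=\tfrac{2\sqrt\pi}{(-2g''(0))^{3/2}}\bigl[f''(0)-f'(0)\,g'''(0)/g''(0)\bigr]$. I would record the Taylor data at the cusp, namely $g''(0)=-6$ and $g'''(0)=6i$, whence $g'''(0)/g''(0)=-i$ and the bracket simplifies to $f''(0)+i f'(0)$; and since $-2g''(0)=12$ is positive real, the branch prescription of Remark~\ref{REM branch} selects the positive root, giving the prefactor $\tfrac{2\sqrt\pi}{12^{3/2}}=\tfrac{\sqrt\pi}{12\sqrt3}$.

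The computational heart --- and the step I expect to be the main obstacle --- is the evaluation of $f'(0)$ and $f''(0)$. Recall $f=\tfrac{k^2q}{2}(x_2'+ix_1')v+V'\cdot(i,1)$ with $v=u^{\text{in}}(x(t))$ and $V=\nabla u^{\text{in}}(x(t))$; differentiating along the curve produces, via the chain rule, terms containing up to fourth-order derivatives of $u^{\text{in}}$ together with the parametrization data at the cusp ($x''(0)=(-6,0)$, $x'''(0)=(0,6)$, and so on). The task is to show that, after substituting the Helmholtz relation $\Delta u^{\text{in}}=-k^2u^{\text{in}}$ to eliminate the Laplacian, every contribution involving a genuine derivative of $u^{\text{in}}$ cancels and the bracket collapses to a multiple of the value at the cusp; concretely one finds $f''(0)+i f'(0)=6\,k^2(q-1)\,u^{\text{in}}(3,0)$, which combined with the prefactor yields $C_2=\sqrt{\pi/12}\,k^2(q-1)\,u^{\text{in}}(3,0)$ (this is the content of the cited Maple verification). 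Since $q\neq 1$, the hypothesis $u^{\text{in}}(3,0)\neq 0$ gives $C_2\neq 0$, and Theorem~\ref{THM 2nd order} then guarantees that $u^{\text{in}}$ is scattered by $D$.
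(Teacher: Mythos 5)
Your proposal is correct and follows essentially the same route as the paper: take $\PC=[-\pi,\pi]$ (no contour deformation, so (H5) is automatic), note that $x_2'(0)=0$ forces $C_1=0$ at the cusp $x(0)=(3,0)$, and then apply Theorem~\ref{THM 2nd order} with the value $C_2=\sqrt{\pi/12}\,k^2(q-1)\,u^{\text{in}}(3,0)$, which --- exactly as in the paper --- is ultimately obtained from the cited symbolic (Maple) computation rather than by hand. Your added details (comparing $x_1$ at the real critical points $t=0,\pm 2\pi/3,\pm\pi$ to confirm the unique maximizer, and checking the branch via $-2g''(0)=12>0$) are consistent with, and slightly more explicit than, the paper's presentation.
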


\subsection{Application: convex region with a straight-sided corner} \label{SEC corner}

Our applications so far have been to domains whose boundaries are given by an analytic parametrization $(x_1(t),x_2(t))$. This was motivated by the fact that, for Lipschitz domains it is already known that, singularities generically scatter.   In this section, we take a step back and illustrate, how the technique developed here can also be used to prove that Lipschitz point singularities generically scatter. For this purpose, we consider a convex region with a smooth boundary except for a straight corner. This scattering result was already established in \cite{BPS} (in $\RR^d$ for a smooth refractive index function $q$). Our example considers the simplified setting: $\RR^2$ with a constant refractive index.

\begin{corollary} \label{CORO corner}
Let $D$ be a convex region with a smooth boundary except for a straight corner at the point $p$, and suppose it has a constant refractive index $0<q \neq 1$. Let $u^{\text{in}}$ be an incident wave such that

\begin{equation*}
u^{\text{in}}(p) \neq 0,
\end{equation*}

\n then $u^{\text{in}}$ is scattered by $D$.

\end{corollary}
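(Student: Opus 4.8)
The plan is to apply the boundary-integral identity \eqref{test fctn xi} (equivalently the representation \eqref{I}) and to extract the asymptotics of $I(\lambda)$ as $\lambda \to \infty$, but now with the dominant contribution coming from the corner $p$ rather than from an interior saddle. Since $D$ is convex with a genuine corner at $p$, its normal cone at $p$ is a nondegenerate angular sector; I would first choose a direction $\eta$ in the interior of this cone and rotate coordinates (equivalently, use the parameter $\alpha$ of Remark~\ref{REM alpha X}) so that $\eta=(1,0)$. For such $\eta$ the point $p$ is the strict unique maximizer of $x_1$ over $\overline{D}$, hence $\Re g(t)=x_1(t)$ attains its maximum on $\Gamma$ only at $p$. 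Parametrizing $\Gamma$ by $t\in[-\pi,\pi]$ with $x(\pm\pi)=p$, the two straight sides meeting at the corner correspond to $t$ near $\pm\pi$, so $|e^{\lambda g+i\tilde\lambda x_2}|=e^{\lambda x_1(t)}$ peaks exactly at the two endpoints. The contribution of the smooth part of $\Gamma$ (where $x_1\le x_1(p)-\delta$) is exponentially smaller, so only the two endpoints matter and, as anticipated, no contour deformation or saddle analysis is required.

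Next I would evaluate the two endpoint contributions by a Laplace/Watson-type expansion, keeping careful track of a sequence of cancellations that arise because both endpoints are the \emph{same} point $p$, with the same values of $u^{\text{in}}$, its gradient, and $x_2$. Integrating by parts once in \eqref{I} to remove the $O(\lambda)$ term $i\lambda g' v$, the boundary term $[\,i v\, e^{\lambda g+i\tilde\lambda x_2}\,]_{-\pi}^{\pi}$ vanishes since $v(\pm\pi)=u^{\text{in}}(p)$ and $x_2(\pm\pi)=p_2$; this reduces the integrand to $\big(-g'\,\nabla u^{\text{in}}(x(t))\cdot(i,1)+\tilde\lambda\,(x_2'+ix_1')\,v\big)\,e^{\lambda g+i\tilde\lambda x_2}$. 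The leading $O(1/\lambda)$ endpoint contributions of the first term are equal in magnitude and opposite in sign (they reduce to $\pm\lambda^{-1}\,\nabla u^{\text{in}}(p)\cdot(i,1)\,e^{\lambda g(p)}$ at the two endpoints) and hence cancel because $\nabla u^{\text{in}}(p)$ is single-valued. Thus the analysis must be pushed to order $\lambda^{-2}$.

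At order $\lambda^{-2}$ two sources contribute and must be combined. The first is the next term in the endpoint expansion of $-g'\,\nabla u^{\text{in}}\cdot(i,1)$, which involves $\tfrac{d}{dt}\big(\nabla u^{\text{in}}(x(t))\cdot(i,1)\big)/g'$; writing $\nabla u^{\text{in}}\cdot(i,1)=2i\,\partial_z u^{\text{in}}$ and using that $u^{\text{in}}$ solves the Helmholtz equation, so $\partial_z\partial_{\bar z}u^{\text{in}}=-\tfrac{k^2}{4}u^{\text{in}}$, the $\partial_z^2 u^{\text{in}}(p)$ pieces cancel between the two endpoints and only a term proportional to $-k^2\,u^{\text{in}}(p)$ survives. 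The second source is the $\tilde\lambda$ term which, since $\tilde\lambda\sim k^2 q/(2\lambda)$, contributes at the same order a term proportional to $+k^2 q\,u^{\text{in}}(p)$. Adding them produces the factor $(q-1)$ and yields, schematically,
\[
I(\lambda)\sim \frac{i k^2 (q-1)}{2\lambda^2}\,u^{\text{in}}(p)\,e^{\lambda g(p)+i\tilde\lambda p_2}\Big(e^{-2i\theta_+}-e^{-2i\theta_-}\Big),
\]
where $\theta_\pm=\arg g'(\pm\pi)$ are the tangent angles of the two sides at $p$. The geometric bracket is nonzero precisely because the interior angle at $p$ is not $\pi$, i.e. $\theta_+\not\equiv\theta_-\pmod{\pi}$, so whenever $(q-1)u^{\text{in}}(p)\neq0$ we get $I(\lambda)\neq0$ for all large $\lambda$, contradicting the nonscattering identity $I(\lambda)\equiv0$; hence $u^{\text{in}}$ scatters.

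The main obstacle is precisely this double cancellation: the leading and next-to-leading terms vanish because the two endpoints coincide and $u^{\text{in}},\nabla u^{\text{in}}$ are single-valued there, forcing a careful computation of the order-$\lambda^{-2}$ asymptotics and a tracking of the two distinct mechanisms generating the $q$- and the $(-1)$-contributions. Getting the $(q-1)$ factor right requires retaining the subdominant $\tilde\lambda$ term (which, as emphasized in the text, must not be dropped despite $\tilde\lambda\to0$) and invoking the Helmholtz relation to collapse all derivative data of $u^{\text{in}}$ at $p$ down to $u^{\text{in}}(p)$ alone. A secondary, more routine point is justifying the exponential localization to the two straight sides and verifying that $p$ can be made the strict unique maximizer of $x_1$ by choosing $\eta$ in the interior of the nondegenerate normal cone of the convex corner.
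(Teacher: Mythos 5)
Your proposal is correct and takes essentially the same route as the paper's proof (Theorem~\ref{THM corner}): localize $I(\lambda)$ at the corner with the rest of $\Gamma$ exponentially negligible, perform Laplace/endpoint expansions, observe that the $O(1)$ and $O(\lambda^{-1})$ contributions from the two sides cancel because $u^{\text{in}}$, $\nabla u^{\text{in}}$ and $x_2$ are single-valued at $p$, and extract the surviving $O(\lambda^{-2})$ term proportional to $(q-1)u^{\text{in}}(p)$ --- indeed your geometric factor $e^{-2i\theta_+}-e^{-2i\theta_-}$ equals $-2i\sin 2\theta$ in the paper's bisected parametrization, so your constant agrees exactly with the paper's $C=2k^2\tfrac{\tan\theta}{1+\tan^2\theta}\,(q-1)\,u^{\text{in}}(0)$. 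The only differences are organizational: you integrate by parts once around the whole closed contour (so the $O(1)$ boundary terms never appear, by periodicity) and allow any direction in the interior of the normal cone, whereas the paper splits $I=I_1+I_2+R$, takes the bisector as the $x_1$-axis, and cancels the per-segment boundary terms explicitly.
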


Theorem~\ref{THM gen} is not directly applicable in this setting; however, the same ideas still apply. We again analyze the asymptotic behavior of $I(\lambda)$ defined in \eqref{I}. To illustrate the approach, we first place the origin of the coordinate system at the corner point $p$, and select the two orthogonal axis so that the setup  matches that  shown in Figure~\ref{FIG corner}. That is, $D$ lies to the left of the origin. The straight line segments $L_1$ and $L_2$ form the corner, whose angular size is $2\theta$, and the $x_1$-axis bisects the corner into two equal angles of size $\theta \in \left(0, \frac{\pi}{2} \right)$. We can split $I$ into the sum of three integrals: 

\begin{equation*}
I(\lambda) = I_1(\lambda) + I_2(\lambda) + R(\lambda),
\end{equation*}

\n where $I_j$ integrates over the line segment $L_j$ for $j=1,2$, and $R$ integrates over the rest of the boundary $L = \partial D \backslash \left( L_1 \cup L_2 \right)$. We choose a parametrization so that $t=0$ corresponds to the corner. Note that in this case $\Re g(t) = x_1(t)$ is maximized at the unique point $t=0$, and $\Re g(0) = 0$. On the other hand, for all points on $L$

\begin{equation*}
\Re g(t) < \max\{a_1, a_2\} < 0.
\end{equation*}

\begin{figure}[H]
    \centering
    \includegraphics[width=0.7\linewidth]{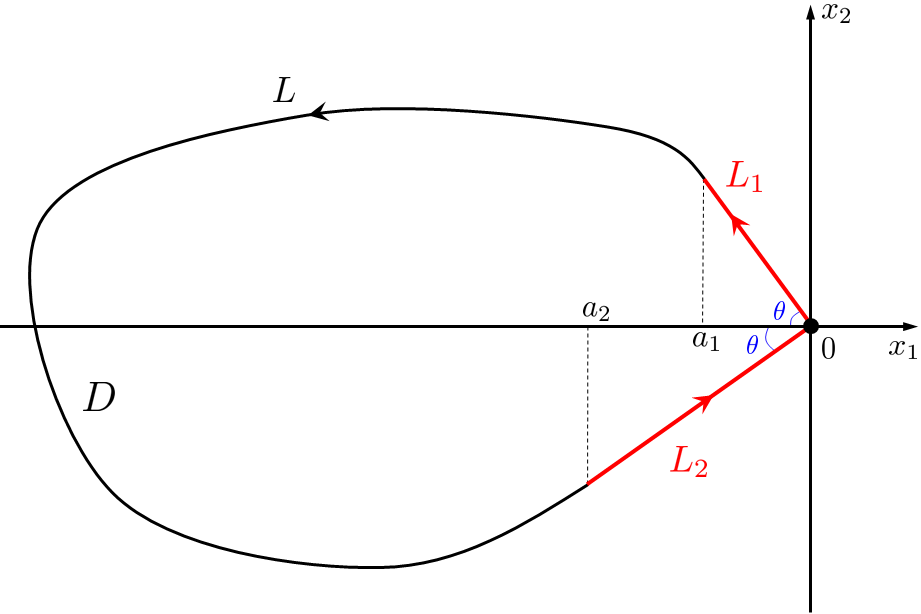}
    \caption{The convex region $D$ with a corner at $0$. Line segments $L_1$ and $L_2$ form the corner, which is bisected by the $x_1$-axis into two equal angles of size $\theta \in \left(0, \frac{\pi}{2} \right)$.}
    \label{FIG corner}
\end{figure}

\n In other words, $R$ is an exponentially small term. The leading behavior comes from the sum $I_1 + I_2$. For each $I_j$, the phase function $g(t)$ is analytic and $\Re g$ is maximized at the endpoint $t=0$. Therefore, the method of steepest descent — or more precisely, Laplace’s method for contour integrals \cite{olver} — applies, and we obtain the asymptotic expansion of $I_j$ for $j=1,2$. It turns out that the first two leading-order terms of $I_1$ and $I_2$ are opposites and cancel in the sum $I_1+I_2$. By contrast, the third-order terms in $I_1$ and $I_2$ add up and do not cancel. This is precisely due to the presence of a corner at $t=0$.

\begin{theorem} \label{THM corner}
Let $D$ be as in Figure~\ref{FIG corner} with constant index of refraction $0<q \neq 1$, and let $I(\lambda)$ be the corresponding integral given by \eqref{I}. Then, as $\lambda \to \infty$

\begin{equation*}
I(\lambda) = \frac{C}{\lambda^2} +  O\left( \lambda^{-3} \right), 
\end{equation*}

\n where

\begin{equation*}
C = 2k^2 \frac{\tan \theta}{1+ \tan^2 \theta} (q-1) u^{\text{in}}(0).
\end{equation*}

\end{theorem}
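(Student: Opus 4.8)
The plan is to treat $I_1$ and $I_2$ as Laplace-type integrals concentrated at the corner $t=0$, carry each asymptotic expansion out to order $\lambda^{-2}$, and exhibit the cancellation of the $O(1)$ and $O(\lambda^{-1})$ contributions so that the genuine corner effect appears at order $\lambda^{-2}$. First I would dispose of $R$: since $L=\partial D\setminus(L_1\cup L_2)$ is bounded away from the corner and $D$ is convex with its rightmost point at the origin, $\Re g=x_1$ satisfies $\Re g<\max\{a_1,a_2\}<0$ on $L$, while the bracket in \eqref{I} is at worst polynomially large in $\lambda$; hence $R(\lambda)=O(e^{-c\lambda})$ for some $c>0$ and can be discarded.

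Next I parametrize the two straight sides linearly, $x(t)=t\tau_1$ on $L_1$ and $x(t)=t\tau_2$ on $L_2$, with $\tau_1=(-\cos\theta,\sin\theta)$ and $\tau_2=(\cos\theta,\sin\theta)$ consistent with the counterclockwise orientation. Then $g$ is affine: $g=-t\,e^{-i\theta}$ on $L_1$ and $g=t\,e^{i\theta}$ on $L_2$, and the full exponent $\lambda g+i\tilde\lambda x_2$ becomes $-\beta_1 t$ (resp. $+\beta_2 t$), where $\beta_1=\lambda e^{-i\theta}-i\tilde\lambda\sin\theta$ and $\beta_2=\lambda e^{i\theta}+i\tilde\lambda\sin\theta$, both with $\Re\beta_j=\lambda\cos\theta\to\infty$ since $\theta\in(0,\tfrac{\pi}{2})$. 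Writing the bracket of \eqref{I} on each segment as $\Phi_j(t)$ — an analytic function of $t$, because $u^{\text{in}}$ is analytic and $x(t)$ is affine — repeated integration by parts yields the endpoint (Watson) expansions
\[
I_1\sim\sum_{m\ge0}\frac{\Phi_1^{(m)}(0)}{\beta_1^{\,m+1}},\qquad
I_2\sim\sum_{m\ge0}\frac{(-1)^m\Phi_2^{(m)}(0)}{\beta_2^{\,m+1}},
\]
the far-endpoint boundary terms being exponentially small for the same reason as $R$.

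I would then extract these expansions to order $\lambda^{-2}$. The crucial bookkeeping point is that $\Phi_j(0)$ has an $O(\lambda)$ piece coming from the $i\lambda g'v$ term in \eqref{I}, so $\Phi_j(0)/\beta_j$ already contributes at $O(1)$; one must therefore retain $\Phi_j(0),\Phi_j'(0),\Phi_j''(0)$, expand $1/\beta_j^{\,m+1}$ in powers of $1/\lambda$ using $\tilde\lambda\sim k^2q/(2\lambda)$ from \eqref{lambda tild asymp}, and evaluate the $\Phi_j^{(m)}(0)$ through directional derivatives of $u^{\text{in}}$ along $\tau_1,\tau_2$ at the corner. Using $e^{\pm i\theta}=\cos\theta\pm i\sin\theta$, the $O(1)$ terms come out to $-i\,u^{\text{in}}(0)$ from $I_1$ and $+i\,u^{\text{in}}(0)$ from $I_2$, which cancel, and the $O(\lambda^{-1})$ terms reduce to $\bigl(i\partial_1 u^{\text{in}}(0)+\partial_2 u^{\text{in}}(0)\bigr)/\lambda$ and its negative, which also cancel. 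These two cancellations are the heart of the statement and reflect the symmetry of $\tau_1,\tau_2$ about the $x_1$-axis combined with the phases $e^{\mp i\theta}$.

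Finally, at order $\lambda^{-2}$ exactly two contributions survive: the second-derivative part collapses to $\tfrac{2\sin\theta\cos\theta}{\lambda^2}\bigl(\partial_{11}u^{\text{in}}(0)+\partial_{22}u^{\text{in}}(0)\bigr)$, while the $\tilde\lambda$-dependent part yields $\tfrac{4\sin\theta\cos\theta}{\lambda}\,\tilde\lambda\,u^{\text{in}}(0)$. Invoking the Helmholtz equation \eqref{u^in Helm}, namely $\partial_{11}u^{\text{in}}(0)+\partial_{22}u^{\text{in}}(0)=-k^2u^{\text{in}}(0)$, together with $\tilde\lambda\sim k^2q/(2\lambda)$, these assemble into
\[
I(\lambda)=\frac{2k^2\sin\theta\cos\theta\,(q-1)}{\lambda^2}\,u^{\text{in}}(0)+O(\lambda^{-3}),
\]
and since $\sin\theta\cos\theta=\tan\theta/(1+\tan^2\theta)$ this is precisely the claimed $C/\lambda^2$. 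The main obstacle is the order-$\lambda^{-2}$ bookkeeping: one must keep the $\tilde\lambda$ corrections to both $\beta_j$ and $\Phi_j$ even though $\tilde\lambda\to0$, because dropping them would discard exactly the $q$-dependence — the $-k^2$ supplied by Helmholtz and the $+k^2q$ supplied by $\tilde\lambda$ are what combine to produce the factor $q-1$.
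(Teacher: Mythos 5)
Your proposal is correct and follows essentially the same route as the paper: discard the exponentially small contribution of $\partial D\setminus(L_1\cup L_2)$, expand $I_1$ and $I_2$ at the corner endpoint $t=0$ by integration by parts (endpoint Laplace/Watson expansions), observe the cancellation of the $O(1)$ and $O(\lambda^{-1})$ terms — your values $\mp i\,u^{\text{in}}(0)$ and $\pm\bigl(i\,\partial_1 u^{\text{in}}(0)+\partial_2 u^{\text{in}}(0)\bigr)/\lambda$ agree exactly with the paper's — and then combine the surviving $O(\lambda^{-2})$ contributions via the Helmholtz equation to produce the factor $q-1$ and the constant $C$. The only difference is bookkeeping: the paper first integrates by parts twice to normalize the amplitude to $f$ and treats $e^{i\tilde{\lambda}x_2}-1$ as a multiplicative perturbation, whereas you fold $i\tilde{\lambda}x_2$ into the linear phases $\beta_j t$ (legitimate here because the sides are straight) and keep the $\lambda$-dependent amplitudes $\Phi_j$; both organizations yield the identical result.
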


\vspace{.1in}

\n Corollary~\ref{CORO corner} follows immediately from the above theorem, whose proof is given in Section~\ref{SECT corner proof}. We observe that in the limit as $\theta$ approaches $\frac{\pi}{2}$ — that is, when the corner disappears — the constant $C$ tends to 0. Finally, if $u^{\text{in}}(p) = 0$ in Corollary~\ref{CORO corner}, then one may consider the next term in the asymptotic expansion of $I(\lambda)$ to obtain a second-order nondegeneracy condition, as was done in earlier applications.

\subsection{Application: circle} \label{SEC disk}

Let $D$ be the unit disk centered at the origin. In this case $x(t) = (\cos t, \sin t)$ and $g(t) = e^{it}$ has no critical points in $\CC$. Therefore, Theorem~\ref{THM gen} is not directly applicable. We introduce the notation
\begin{equation} \label{I cal}
\PI(\lambda, u^{\text{in}}) = \int_D u^{\text{in}}(x) e^{ix\cdot \xi} dx ,
\end{equation}
where the vector $\xi$ is given by \eqref{xi with lambda} and  \eqref{lambda tild asymp}. In Section~\ref{SECT Prelim} we reduced this integral to a boundary integral. Specifically, 

\begin{equation*}
\PI(\lambda, u^{\text{in}}) = \frac{1}{(q-1)k^2} I(\lambda, u^{\text{in}}),
\end{equation*}

\n where $I$ is defined by \eqref{I}. The nonscattering of $u^{\text{in}}$ implies that the integral $\PI$, or equivalently, $I$ vanishes for all $\lambda>0$. Consider a single incident plane wave with incidence angle $\alpha \in [-\pi, \pi]$:

\begin{equation*}
u^{\text{in}}(x) = e^{ikx \cdot \eta}, \qquad \qquad \eta = (\cos \alpha, \sin \alpha).
\end{equation*}

\n For simplicity of notation set

\begin{equation*}
\PI(\lambda, \alpha) = \PI(\lambda, e^{ikx \cdot \eta})=\int_D e^{ix \cdot (k \cos \alpha-i\lambda\,,\,k \sin \alpha + \lambda + \tilde{\lambda})} dx.
\end{equation*}

\n For plane waves it is more convenient to work with $\PI$ rather than $I$, because it can be reduced to a boundary integral in a slightly different way, that is better suited for the analysis.

\begin{lemma} \label{LEM circle plain wave}
Let $J_1$ be the Bessel function of the first kind of order 1, then

\begin{equation} \label{I circle LEM}
\PI(\lambda, \alpha) = \frac{\pi}{\sqrt{c_{\lambda, \alpha}}} J_1 \left( 2 \sqrt{c_{\lambda, \alpha}} \right),
\end{equation}

\n where, with $\tilde{\lambda}$ given by \eqref{lambda tild asymp},

\begin{equation*}
c_{\lambda, \alpha} = - \frac{ik}{2} e^{i \alpha} \lambda + \frac{\lambda \tilde{\lambda}}{2} + \frac{k^2}{4} + \frac{\tilde{\lambda} k}{2} \sin \alpha + \frac{\tilde{\lambda}^2}{4}.
\end{equation*}

\n In particular, as $\lambda \to \infty$, uniformly in $\alpha$,

\begin{equation*}
c_{\lambda, \alpha} \sim - \frac{ik}{2} e^{i \alpha} \lambda.   
\end{equation*}

\end{lemma}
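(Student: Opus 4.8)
The plan is to compute the integral $\PI(\lambda,\alpha) = \int_D e^{ix\cdot \zeta}\,dx$ over the unit disk directly, where $\zeta = (k\cos\alpha - i\lambda,\ k\sin\alpha + \lambda + \tilde{\lambda})$ is a fixed complex vector. The key observation is that the integral of a complex plane wave $e^{ix\cdot\zeta}$ over a disk has a closed form in terms of a Bessel function, exactly as in the classical real case $\int_{D} e^{ix\cdot\zeta}\,dx = 2\pi\,\frac{J_1(|\zeta|)}{|\zeta|}$ for $\zeta\in\RR^2$. The content of the lemma is that this formula persists for complex $\zeta$, with $|\zeta|^2$ replaced by the analytic continuation $\zeta\cdot\zeta$ (the bilinear dot product, \emph{not} the Hermitian norm). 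First I would verify that the right-hand side of \eqref{I circle LEM} is precisely this continuation: a short computation shows $\tfrac14\,\zeta\cdot\zeta = c_{\lambda,\alpha}$, so that $2\sqrt{c_{\lambda,\alpha}} = \sqrt{\zeta\cdot\zeta}$ and the claimed answer reads $\PI = 2\pi\,\tfrac{J_1(\sqrt{\zeta\cdot\zeta})}{\sqrt{\zeta\cdot\zeta}}$, matching the expected analytic continuation of the real formula.

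To establish this rigorously I would use an \emph{analyticity} argument rather than attempting a direct contour computation. Both sides of \eqref{I circle LEM} are entire functions of $\zeta\in\CC^2$: the left side because $e^{ix\cdot\zeta}$ is entire in $\zeta$ and the domain of integration $D$ is compact (so differentiation under the integral sign is justified), and the right side because $w\mapsto J_1(\sqrt{w})/\sqrt{w}$ is an entire function of $w=\zeta\cdot\zeta$ (the apparent branch point at $w=0$ is removable, since $J_1(z)/z$ is even in $z$). For real $\zeta\in\RR^2$ the identity is the classical Bessel integral formula, obtained by writing $x=(r\cos\phi,r\sin\phi)$, integrating out the angular variable to produce $J_0(r|\zeta|)$ via the standard representation, and then using $\int_0^1 r J_0(r\rho)\,dr = J_1(\rho)/\rho$. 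Since two entire functions on $\CC^2$ that agree on the real locus $\RR^2$ must coincide everywhere (by the identity theorem for holomorphic functions of several variables), the formula extends to our complex $\zeta$. Substituting the specific $\zeta$ and simplifying $\tfrac14\zeta\cdot\zeta$ into the stated $c_{\lambda,\alpha}$ completes the derivation.

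Finally, the asymptotic claim $c_{\lambda,\alpha}\sim -\tfrac{ik}{2}e^{i\alpha}\lambda$ as $\lambda\to\infty$ is immediate from the explicit expression for $c_{\lambda,\alpha}$: using $\tilde{\lambda}\sim k^2q/(2\lambda)\to 0$ from \eqref{lambda tild asymp}, the term $-\tfrac{ik}{2}e^{i\alpha}\lambda$ grows linearly in $\lambda$ while the remaining terms $\tfrac{\lambda\tilde\lambda}{2}\to \tfrac{k^2q}{4}$, $\tfrac{k^2}{4}$, $\tfrac{\tilde\lambda k}{2}\sin\alpha\to 0$, and $\tfrac{\tilde\lambda^2}{4}\to 0$ all remain bounded; the uniformity in $\alpha$ is clear since $|e^{i\alpha}|=1$ and all the $\alpha$-dependence is through bounded trigonometric factors.

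I expect the main obstacle to be purely bookkeeping rather than conceptual: carefully checking the algebraic identity $\tfrac14\,\zeta\cdot\zeta = c_{\lambda,\alpha}$ with the correct cross terms (note $\zeta\cdot\zeta = (k\cos\alpha - i\lambda)^2 + (k\sin\alpha + \lambda + \tilde\lambda)^2$, so the $-i\lambda\cdot k\cos\alpha$ and $\lambda\cdot k\sin\alpha$ pieces must combine correctly into the $-\tfrac{ik}{2}e^{i\alpha}\lambda$ term), and justifying that the branch of the square root is immaterial because the even function $J_1(z)/z$ depends only on $z^2 = \zeta\cdot\zeta$. The analytic-continuation step is clean and avoids any delicate convergence issues that a direct steepest-descent evaluation on the disk would entail.
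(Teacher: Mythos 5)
Your proof is correct, but it takes a different route from the one the paper writes out. You prove the complexified disk formula $\int_D e^{ix\cdot\zeta}\,dx = 2\pi J_1\bigl(\sqrt{\zeta\cdot\zeta}\,\bigr)/\sqrt{\zeta\cdot\zeta}$ by establishing it for real $\zeta$ (polar coordinates, $\int_0^1 rJ_0(r\rho)\,dr = J_1(\rho)/\rho$) and then invoking the identity theorem for entire functions on $\CC^2$; your algebra $\tfrac14\,\zeta\cdot\zeta = c_{\lambda,\alpha}$ checks out, and your handling of the branch ambiguity via the evenness of $J_1(z)/z$ is exactly the point made in Remark~\ref{REM circle}. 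In fact the paper concedes in that same remark that the lemma ``follows directly from \eqref{J_1 disk} in the Appendix,'' which is precisely your argument (the Appendix formula is the analytic continuation you construct, there cited to the literature rather than proved). The paper's own proof instead reduces $\PI$ to a boundary integral via the divergence identity $e^{ix\cdot\xi} = (\xi_1+i\xi_2)^{-1}\,\div\bigl[(-i,1)e^{ix\cdot\xi}\bigr]$, substitutes $z=e^{it}$ to get a contour integral $\int_{\PC_1}\exp\{az - b/z\}\,dz$ over the unit circle, and evaluates it by the residue computation of Lemma~\ref{LEM Bessel integral}. What your approach buys is brevity and the avoidance of any contour manipulation; what the paper's approach buys is reusability: Lemma~\ref{LEM Bessel integral} handles the weighted integrals $\int_{\PC_1} z^{n-1}\exp\{az-b/z\}\,dz$ for all $n\in\ZZ$, which is exactly what is needed for the Herglotz-wave computation in Lemma~\ref{LEM Herglotz disk}, where the incident wave sits inside the integral and a simple analytic continuation of the characteristic-function transform no longer suffices.
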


\begin{remark} \label{REM circle} \mbox{}
\normalfont

\begin{enumerate}

\item[$\bullet$]  The function $\frac{J_1(\sqrt{z})}{\sqrt{z}}$ (or more generally, $\frac{J_n(\sqrt{z})}{z^{n/2}}$ for $n=0,1,...$) is entire. Therefore, the formula \eqref{I circle LEM} is unaffected by the choice of the branch of the square root function.

\item[$\bullet$] This lemma follows directly from \eqref{J_1 disk} in the Appendix. However, we also provide an alternative proof as a special case of a more general formula, which we will use to analyze the incident Herglotz waves described below.

\end{enumerate}

\end{remark}

This lemma readily shows that $\PI$ cannot vanish for all $\lambda>0$, and consequently plane waves always scatter from $D$. This result is known in the literature and follows from the radial symmetry and the unique determination result of $q$ from the scattering data for the disk $D$ \cite{Buk}. Lemma~\ref{LEM circle plain wave} provides an alternative and elementary proof of this fact, which, as we saw in previous sections, generalizes to non radially symmetric regions.  

Regarding more general incident waves, note that the integral \eqref{I cal} is linear with respect to $u^{\text{in}}$. As a consequence, a linear combination of plane waves with direction angles $\alpha_j$ corresponds to a linear combination of the integrals $\PI(\lambda, \alpha_j)$. For a Herglotz wave \eqref{Herglotz} with a density function $\psi$, nonscattering implies that

\begin{equation} \label{Herglotz nonscat}
\int_{-\pi}^{\pi} \psi(\alpha) \PI(\lambda, \alpha) d \alpha = 0, \qquad \qquad \forall \ \lambda>0,
\end{equation}
where as before we write $\psi(\alpha)$ instead of $\psi(\cos \alpha,\sin \alpha)$. The asymptotic behavior of the above integral can then be studied as $\lambda \to \infty$. In view of Lemma~\ref{LEM circle plain wave} we can use the large argument approximation of the Bessel function $J_1$ and substitute it in \eqref{Herglotz nonscat}, in place of $\PI$. This will be addressed elsewhere in the future. Here we confine ourselves to analyzing the special Herglotz waves $h_n$ \eqref{H_n}: for $n \in \ZZ$

\begin{equation} \label{Bessel inc}
u^{\text{in}}(x) = h_n(x) = 2\pi i^n  e^{in \theta} J_n(kr),
\end{equation}

\n where $r, \theta$ denote the polar coordinates of $x$ and $J_n$ is the Bessel function of order $n$. As already discussed, \eqref{Bessel inc} is a Herglotz wave with density function

\begin{equation} \label{psi Bessel}
\psi(\alpha) = e^{in \alpha}.
\end{equation}

\n For this incident wave, instead of working with $\PI$ ({\it i.e.}, analyzing the integral \eqref{Herglotz nonscat} with density function \eqref{psi Bessel}) it is more convenient to work with $I$.

\begin{lemma} \label{LEM Herglotz disk}
Let $I(\lambda)$ be the integral \eqref{I} for the incident wave \eqref{Bessel inc}, where $n \in \ZZ$. Let $\tilde{\lambda}$ be given by \eqref{lambda tild asymp}, then

\begin{equation*}
I(\lambda) = 4\pi^2 C k \left(-\frac{i \tilde{\lambda}}{k \sqrt{q}} \right)^n
\end{equation*}

\n where

\begin{equation*}
C =  J_n'(k) J_n(k \sqrt{q}) - \sqrt{q} J_n(k) J_n'(k \sqrt{q}).
\end{equation*}

\end{lemma}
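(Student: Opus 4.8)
The plan is to evaluate $I(\lambda)$ directly from its definition \eqref{I}, expanding every factor of the integrand into a Fourier series in $e^{it}$ and then extracting the surviving modes by orthogonality on $[-\pi,\pi]$. On the unit circle $x(t)=(\cos t,\sin t)$ we have $g(t)=e^{it}$, $g'(t)=ie^{it}$, $x_1'=-\sin t$, $x_2'=\cos t$, and the outward normal is $(\cos t,\sin t)$, so $(x_2',-x_1')\cdot V=\partial_r u^{\text{in}}|_{r=1}$. Since $u^{\text{in}}=h_n=2\pi i^n e^{in\theta}J_n(kr)$ and $\theta=t$, $r=1$ on $\Gamma$, the three bracketed terms of \eqref{I} become
\begin{equation*}
(x_2',-x_1')\cdot V = 2\pi i^n kJ_n'(k)e^{int},\quad i\lambda g'v = -2\pi\lambda i^n J_n(k)e^{i(n+1)t},\quad i\tilde\lambda x_1'v = -\pi i^n\tilde\lambda J_n(k)\bigl(e^{i(n+1)t}-e^{i(n-1)t}\bigr).
\end{equation*}
Thus the integrand is a finite trigonometric polynomial in $e^{it}$ multiplying the exponential $E(t)=e^{\lambda g+i\tilde\lambda x_2}$.

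The central step is to expand $E(t)$. I would rewrite the exponent as $\lambda e^{it}+i\tilde\lambda\sin t=\bigl(\lambda+\tfrac{\tilde\lambda}{2}\bigr)e^{it}-\tfrac{\tilde\lambda}{2}e^{-it}$ and recognize it as a two-parameter Bessel generating function: for constants $u,v$,
\begin{equation*}
e^{ue^{it}+ve^{-it}}=\sum_{m\in\ZZ}\Bigl(\tfrac{2u}{z}\Bigr)^m J_m(z)\,e^{imt},\qquad z=2\sqrt{-uv}.
\end{equation*}
The key algebraic identity $\tilde\lambda(2\lambda+\tilde\lambda)=k^2q$, immediate from \eqref{lambda tild asymp}, gives $u=\lambda+\tfrac{\tilde\lambda}{2}=\tfrac{k^2q}{2\tilde\lambda}$ and $v=-\tfrac{\tilde\lambda}{2}$, whence $-uv=\tfrac{k^2q}{4}$, $z=k\sqrt q$, and the coefficients are $c_m=(k\sqrt q/\tilde\lambda)^m J_m(k\sqrt q)$. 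The series converges uniformly for real $t$, so term-by-term integration is legitimate, and $\tfrac{1}{2\pi}\int_{-\pi}^\pi e^{ilt}E(t)\,dt=c_{-l}$. Using $J_{-l}=(-1)^lJ_l$ one has $c_{-l}=(-\tilde\lambda/(k\sqrt q))^l J_l(k\sqrt q)$.

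Substituting the expansion into $I(\lambda)$ and picking out the modes $l=n$, $l=n+1$, $l=n-1$ reduces $I$ to a finite combination of $c_{-n}$, $c_{-(n+1)}$, $c_{-(n-1)}$, i.e.\ to $i^n(-\tilde\lambda/(k\sqrt q))^n$ times a bracket involving $J_n'(k)J_n(k\sqrt q)$, $J_n(k)J_{n+1}(k\sqrt q)$ and $J_n(k)J_{n-1}(k\sqrt q)$. I would then apply the recurrence $J_{n-1}-J_{n+1}=2J_n'$ together, once more, with $\tilde\lambda(2\lambda+\tilde\lambda)=k^2q$, to collapse the two $J_{n\pm1}(k\sqrt q)$ contributions into $-\sqrt q\,J_n'(k\sqrt q)$; this produces exactly $C=J_n'(k)J_n(k\sqrt q)-\sqrt q\,J_n(k)J_n'(k\sqrt q)$ and the stated prefactor, upon noting $i^n(-\tilde\lambda/(k\sqrt q))^n=(-i\tilde\lambda/(k\sqrt q))^n$. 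The main obstacle is purely bookkeeping: the $\tilde\lambda$-dependent coefficients of the three source terms must conspire so that the $J_{n+1}(k\sqrt q)$ contributions recombine correctly with the $J_{n-1}(k\sqrt q)$ term, and it is precisely the identity $\tilde\lambda(2\lambda+\tilde\lambda)=k^2q$ that effects this cancellation — illustrating why $\tilde\lambda$ cannot be dropped.

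As an independent check I would note that the same formula follows from $\PI(\lambda,u^{\text{in}})=\tfrac{1}{(q-1)k^2}I(\lambda)$ and the Jacobi--Anger expansion $e^{ix\cdot\xi}=\sum_m i^m J_m(k\sqrt q\,r)e^{im(\theta-\beta)}$ for the complex direction $\xi=k\sqrt q(\cos\beta,\sin\beta)$, where $e^{i\beta}=i\tilde\lambda/(k\sqrt q)$. The $\theta$-integral over the disk selects the single mode $m=-n$, reducing $\PI$ to the Lommel cross-product integral $\int_0^1 J_n(kr)J_n(k\sqrt q\,r)\,r\,dr=C/(k(q-1))$, which recovers the result.
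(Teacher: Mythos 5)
Your proposal is correct and is essentially the paper's own argument in different notation: under the substitution $z=e^{it}$, your expansion of $\exp\{(\lambda+\tfrac{\tilde\lambda}{2})e^{it}-\tfrac{\tilde\lambda}{2}e^{-it}\}$ via the Bessel generating function and extraction of Fourier modes by orthogonality is exactly the paper's evaluation of the contour integrals $A_n(\lambda)$ by residues (its Lemma~\ref{LEM Bessel integral}), and both arguments then conclude with the same key identity $\tilde\lambda(2\lambda+\tilde\lambda)=k^2q$ and the recurrence $J_{n+1}-J_{n-1}=-2J_n'$. Your closing cross-check via Jacobi--Anger and the Lommel cross-product integral is a valid independent confirmation, closer in spirit to the separation-of-variables derivation the paper attributes to the literature, but the main computation coincides with the paper's proof.
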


\n In particular, if $C \neq 0$ the incident wave \eqref{Bessel inc} scatters. However, it may happen that $C=0$. In fact, for any fixed $n \geq 0$ and $q \neq 1$ there are infinitely many positive $k_j \to \infty$ for which $C=0$, and at these special wavenumbers the incident wave \eqref{Bessel inc} is nonscattering \cite{CK}. This result was established using the radial symmetry of the disk and separation of variables. Here we see how the Wronskian $C$ not unexpectedly arises in the integral $I(\lambda)$.

The above two lemmas are proven in Section~\ref{SECT proof circle}.

\section{Proofs of Lemmas~\ref{LEM Herglotz disk} and \ref{LEM circle plain wave}} \label{SECT proof circle}
\setcounter{equation}{0}

\subsubsection{Proof of Lemma~\ref{LEM Herglotz disk}}

Let us simplify the integral $I(\lambda)$ \eqref{I} for the incident wave \eqref{Bessel inc} and the circle $x(t) = (\cos t, \sin t)$. First, note that

\begin{equation*}
v(t) = u^{\text{in}}(x(t)) = 2\pi i^n J_n(k) e^{int}.
\end{equation*}

\n Now

\begin{equation*}
\begin{split}
u^{\text{in}}_{x_1}(x(t)) = 2\pi i^n \left[ k J_n'(k) \cos t - in J_n(k) \sin t \right] e^{int}
\\[.1in]
u^{\text{in}}_{x_2}(x(t)) = 2\pi i^n \left[ k J_n'(k) \sin t + in J_n(k) \cos t \right] e^{int}
\end{split}
\end{equation*}

\n and consequently

\begin{equation*}
(x_2', -x_1') \cdot V = (\cos t, \sin t) \cdot \nabla u^{\text{in}}(x(t)) = 2\pi i^n k J_n'(k) e^{int}.      
\end{equation*}

\n Thus, we obtain

\begin{equation*}
I(\lambda) = 2\pi i^n \int_{-\pi}^\pi e^{int} \left[ k J_n'(k) - \lambda J_n(k) e^{it} - i \tilde{\lambda} J_n(k) \sin t \right] \exp\left\{\lambda e^{it} + i \tilde{\lambda} \sin t\right\} dt.
\end{equation*}

\n Let us now change the integration variable to $z = e^{it}$. As $t$ is real, $\overline{z} = z^{-1}$, and we may use the identities

\begin{equation} \label{sin cos z}
\cos t = \frac{1}{2} \left( z + \frac{1}{z} \right)
\qquad \text{and} \qquad
\sin t = \frac{1}{2i} \left( z - \frac{1}{z} \right),
\end{equation}

\n to rewrite

\begin{equation*}
I(\lambda) = 2\pi i^{n-1} \int_{\PC_1} z^{n-1} \left[ k J_n'(k) - \lambda J_n(k) z - \frac{\tilde{\lambda}}{2} J_n(k) \left( z - \frac{1}{z} \right) \right] \exp\left\{ \left( \lambda + \tfrac{\tilde{\lambda}}{2} \right) z - \frac{\tilde{\lambda}}{2z}\right\} dz,
\end{equation*}

\n where $\PC_1$ denotes the unit circle centered at the origin traversed counterclockwise. We can conveniently write the above formula as

\begin{equation} \label{I using A disk}
\frac{1}{4\pi^2 i^n} I(\lambda) = k J_n'(k) A_n(\lambda) - \left( \lambda + \tfrac{\tilde{\lambda}}{2} \right) J_n(k) A_{n+1}(\lambda) + \tfrac{\tilde{\lambda}}{2} J_n(k) A_{n-1}(\lambda),
\end{equation}

\n where

\begin{equation*}
A_n(\lambda) =  \frac{1}{2\pi i}\int_{\PC_1} z^{n-1} \exp\left\{ \left( \lambda + \tfrac{\tilde{\lambda}}{2} \right) z - \frac{\tilde{\lambda}}{2z} \right\} dz .   
\end{equation*}

\n The goal now is to understand the behavior of this integral as $\lambda \to \infty$. The following lemma shows an interesting connection to the Bessel functions:

\begin{lemma} \label{LEM Bessel integral}
For  $n \in \ZZ$, let $J_n$ denote the Bessel function of order $n$.
Assume $a, b \in \CC$, then

\begin{equation} \label{Bessel int}
\frac{1}{2\pi i}\int_{\PC_1} z^{n-1} \exp\left\{a z - \frac{b}{z}\right\} dz  = \frac{J_{|n|} \left(2\sqrt{ab}\right)}{\left(ab\right)^{|n|/2}} \cdot
\begin{cases}
\displaystyle (-b)^{n} , \qquad &n \geq 0
\\[.2in]
\displaystyle a^{-n}, &n \leq 0
\end{cases}
\end{equation}

\end{lemma}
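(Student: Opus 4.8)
The plan is to recognize the integrand as a rescaling of the classical generating function for Bessel functions of integer order and then to extract the relevant Laurent coefficient by a residue computation. Recall the generating function identity
\[
\exp\left\{ \tfrac{x}{2}\left(w - \tfrac{1}{w}\right)\right\} = \sum_{m \in \ZZ} J_m(x)\, w^m, \qquad w \neq 0.
\]
First I would match this with $\exp\{az - b/z\}$ via the substitution $w = \sqrt{a/b}\, z$ and $x = 2\sqrt{ab}$, chosen precisely so that $\tfrac{x}{2} w = az$ and $\tfrac{x}{2w} = b/z$. This yields the Laurent expansion
\[
\exp\left\{ a z - \tfrac{b}{z}\right\} = \sum_{m \in \ZZ} J_m\!\left(2\sqrt{ab}\right) \left(\tfrac{a}{b}\right)^{m/2} z^m .
\]

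Next I would insert this expansion into the integral \eqref{Bessel int}. Since the series converges uniformly on the compact contour $\PC_1$ (the Bessel coefficients decay rapidly), term-by-term integration is justified, and I would apply the elementary residue identity $\frac{1}{2\pi i}\int_{\PC_1} z^{k}\, dz = \delta_{k,-1}$. Only the single term $m = -n$ survives, giving
\[
\frac{1}{2\pi i}\int_{\PC_1} z^{n-1} \exp\left\{a z - \tfrac{b}{z}\right\} dz = J_{-n}\!\left(2\sqrt{ab}\right) \left(\tfrac{a}{b}\right)^{-n/2} .
\]
I would then invoke the integer-order reflection formula $J_{-n} = (-1)^n J_n$ and split into the two cases. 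For $n \geq 0$ one has $J_{-n} = (-1)^n J_{|n|}$ and $(a/b)^{-n/2} = (b/a)^{n/2}$, so the right-hand side becomes $(-1)^n J_{|n|}(2\sqrt{ab})(b/a)^{n/2}$, which is exactly $(-b)^n J_{|n|}(2\sqrt{ab})/(ab)^{|n|/2}$. For $n \leq 0$, writing $n = -|n|$ gives $J_{-n} = J_{|n|}$ and $(a/b)^{-n/2} = (a/b)^{|n|/2}$, matching $a^{-n} J_{|n|}(2\sqrt{ab})/(ab)^{|n|/2}$. In both cases this reproduces \eqref{Bessel int}.

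Finally I would address well-definedness and branch independence. The left-hand side of \eqref{Bessel int} is manifestly single-valued, since $z^{n-1}\exp\{az - b/z\}$ has an integer power of $z$. For the right-hand side, recall (cf. Remark~\ref{REM circle}) that $J_{|n|}(\sqrt{z})/z^{|n|/2}$ is entire; hence both sides are entire functions of $(a,b) \in \CC^2$, the stated formula is independent of the branch of $\sqrt{ab}$ used in the substitution, and the degenerate cases $ab = 0$ follow by analytic continuation from $ab \neq 0$. I do not anticipate a genuine obstacle here: the argument is essentially bookkeeping, and the only delicate points—justifying the interchange of summation and integration, and ensuring branch consistency of the square-root substitution—are both resolved cleanly, the former by uniform convergence on $\PC_1$ and the latter by the entirety observation above.
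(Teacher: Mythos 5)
Your proof is correct, but it reaches \eqref{Bessel int} by a different route than the paper. The paper's argument is self-contained: it multiplies the power series of $e^{az}$ and $e^{-b/z}$, extracts the coefficient of $z^{-1}$ by the residue theorem (the terms with $l=m+n$), and then resums the resulting single series $\sum_{m\geq 0} a^m(-b)^{m+n}/(m!\,(m+n)!)$ into $(-b)^n J_n(2\sqrt{ab})/(ab)^{n/2}$ using the entire function $\zeta \mapsto J_n(2\sqrt{\zeta})/\zeta^{n/2}$; the case $n<0$ is reduced to $n\geq 0$ by a change of variables together with $J_{-n}=(-1)^n J_n$. You instead quote the classical generating function (Schl\"omilch expansion) $\exp\{\tfrac{x}{2}(w-1/w)\}=\sum_{m\in\ZZ}J_m(x)w^m$, substitute $w=\sqrt{a/b}\,z$, $x=2\sqrt{ab}$, and read off the $m=-n$ Laurent coefficient. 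Both proofs are residue extractions, and the standard proof of the generating function is exactly the paper's double-series computation, so at bottom the two arguments coincide; still, the packaging differs enough to compare. Your version buys brevity (given the off-the-shelf identity) and a uniform treatment of both signs of $n$ via the reflection formula. The paper's version buys freedom from branch and degeneracy bookkeeping: it never forms $\sqrt{a/b}$, so it requires neither $a,b\neq 0$ nor your final analytic-continuation patch—square roots enter only through the entire function $J_{|n|}(2\sqrt{\zeta})/\zeta^{|n|/2}$. Two small points would make your write-up airtight: note explicitly that the Schl\"omilch expansion is valid for the complex argument $x=2\sqrt{ab}$ (it is, since for fixed $w\neq 0$ both sides are entire in $x$ and agree for real $x$), and note that the branches of $\sqrt{ab}$ and $\sqrt{a/b}$ must be chosen consistently, e.g.\ as $\sqrt{a}\sqrt{b}$ and $\sqrt{a}/\sqrt{b}$ for fixed choices of $\sqrt{a},\sqrt{b}$, so that $\tfrac{x}{2}w=az$ and $\tfrac{x}{2w}=b/z$ hold exactly; your closing entirety observation then disposes of both issues at once.
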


\begin{remark} 
\normalfont The formula \eqref{Bessel int} is unaffected by the choice of the branch of the square root function (cf. Remark~\ref{REM circle}). If we choose the principal branch of the square root function, then for any $a,b \in \CC \backslash \RR_-$ with $ab \in \CC \backslash \RR_-$, the right-hand side of \eqref{Bessel int} simplifies to

\begin{equation*}
(-1)^n J_n(2\sqrt{ab}) \left( \frac{b}{a} \right)^{\frac{n}{2}},
\end{equation*}

\n which holds for any $n \in \ZZ$.

\end{remark}

\begin{proof}
Let us present the proof for $n \geq 0$. The case $n < 0$ follows by a change of variables and the fact that $J_{-n}=(-1)^nJ_n$. We start with the expansion

\begin{equation*}
z^{n-1} \exp\left\{a z - \frac{b}{z}\right\} = z^{n-1} \sum_{m,l=0}^\infty \frac{a^m (-b)^l}{m! l!} \frac{z^m}{z^l}
\end{equation*}

\n By the residue theorem, in order to evaluate the integral over $\PC_1$,  we just need to find the coefficient of $1/z$ in the above expansion. This is obtained by letting $l = m + n$:

\begin{equation} \label{ab series}
\sum_{m=0}^\infty \frac{a^m (-b)^{m+n}}{m! (m+n)!}.   
\end{equation}

\n To express this sum in terms of the Bessel function, we first recall that

\begin{equation*}
J_n(x) = \sum_{m=0}^\infty \frac{(-1)^{m} }{m! (m+n)!} \left( \frac{x}{2} \right)^{2m+n} = \left( \frac{x}{2} \right)^{n} \sum_{m=0}^\infty \frac{(-1)^{m} }{m! (m+n)!} \left( \frac{x^2}{4} \right)^{m}.
\end{equation*}

\n But then

\begin{equation*}
\sum_{m=0}^\infty \frac{(-1)^{m}}{m! (m+n)!} \zeta^m = \frac{J_n(2\sqrt{\zeta})}{\zeta^{n/2}}.
\end{equation*}

\n To conclude the proof we apply this to \eqref{ab series} with $\zeta = ab$.
\end{proof}





Direct simplification gives that

\begin{equation*}
\left( \lambda + \frac{\tilde{\lambda}}{2} \right) \frac{\tilde{\lambda}}{2} = \frac{k^2 q}{4}.
\end{equation*}

\n Set $c = k \sqrt{q}/2$. Using Lemma~\ref{LEM Bessel integral} and \eqref{I using A disk} we then get

\begin{equation*}
\begin{split}
\frac{I(\lambda)}{4\pi^2 \left( - \frac{i\tilde{\lambda}}{2} \right)^n} &= k J_n'(k) \frac{J_n(2c)}{c^n} + c^2 J_n(k) \frac{J_{n+1}(2c)}{c^{n+1}} - J_n(k) \frac{J_{n-1}(2c)}{c^{n-1}} =
\\[.1in]
&= k J_n'(k) \frac{J_n(2c)}{c^n} + \frac{J_n(k)}{c^{n-1}} \left[ J_{n+1}(2c) - J_{n-1}(2c) \right] =
\\[.1in]
& = \frac{1}{c^n} \left[ k J_n'(k) J_n(2 c) - 2c J_n(k) J_n'(2 c) \right],
\end{split}
\end{equation*}

\n where in the last step we used the recurrence relation $J_{n+1} - J_{n-1} = -2 J_n'$.

\subsection{Proof of Lemma~\ref{LEM circle plain wave}}

\n We work with the integral

\begin{equation} \label{PI disk}
\PI(\lambda, \alpha)=\int_D \exp \left\{ ix \cdot \left( k \cos \alpha - i\lambda, k \sin \alpha + \lambda + \tilde{\lambda} \right) \right\} dx.
\end{equation}

\n Note that for any $\xi = (\xi_1, \xi_2) \in \CC^2$ we can write

\begin{equation*}
e^{i x \cdot \xi} = \frac{1}{\xi_1 + i \xi_2} \div \left[(-i,1) e^{i x \cdot \xi}\right] .
\end{equation*}

\n Consequently, the divergence theorem implies

\begin{equation*}
\int_D e^{i x \cdot \xi} dx = -\frac{1}{\xi_1 + i \xi_2} \int_\Gamma e^{i x \cdot \xi} \left( dx_1 + idx_2 \right).
\end{equation*}

\n Using this identity in \eqref{PI disk} we arrive at

\begin{equation} \label{I with B}
\PI(\lambda, \alpha) = -\frac{1}{k e^{i\alpha} + i \tilde{\lambda}} B(\lambda, \alpha),
\end{equation}

\n where

\begin{equation*}
B(\lambda, \alpha) = \int_{\Gamma} \exp \left\{ \lambda (x_1 + ix_2) + ik (x_1 \eta_1 + x_2 \eta_2) + i\tilde{\lambda} x_2 \right\} \left( dx_1 + idx_2 \right).
\end{equation*}

\n As $\Gamma$ is the unit circle parametrized by $x_1 = \cos t$ and $x_2 = \sin t$ with $t \in [-\pi, \pi]$, the above integral becomes

\begin{equation} \label{B circle}
B(\lambda, \alpha) = i \int_{-\pi}^\pi \exp \left\{ \lambda e^{it} + ik (\eta_1 \cos t + \eta_2 \sin t) + i\tilde{\lambda} \sin t \right\} e^{it} dt
\end{equation}

\n Setting $z = e^{it}$ and using \eqref{sin cos z} we get

\begin{equation*}
\eta_1 \cos t + \eta_2 \sin t = \frac{\eta_1 - i \eta_2}{2} z +\frac{\eta_1 + i \eta_2}{2z} = \frac{e^{-i\alpha}}{2} z + \frac{e^{i\alpha}}{2z}.
\end{equation*}

\n Thus, \eqref{B circle} can be simplified to

\begin{equation*} 
B(\lambda, \alpha) = \int_{\PC_1} \exp \left\{a z - \frac{b}{z} \right\} dz,
\end{equation*}

\n where $\PC_1$ denotes the unit circle centered at the origin traversed counterclockwise and where, suppressing the dependence on $\lambda$ and $\alpha$,

\begin{equation*}
a = \lambda + \frac{\tilde{\lambda}}{2} + \frac{ik}{2} e^{-i\alpha} ,
\qquad \qquad
b = \frac{\tilde{\lambda}}{2} - \frac{ik}{2} e^{i\alpha}.
\end{equation*}

\n Lemma~\ref{LEM Bessel integral} now gives

\begin{equation*}
B(\lambda, \alpha) = -2\pi i b \frac{J_1(2\sqrt{a b})}{\sqrt{a b}}.
\end{equation*}

\n To conclude the proof of Lemma~\ref{LEM circle plain wave} we substitute this identity into \eqref{I with B} and simplify the product $a b$.

\section{Proofs of Theorems~\ref{THM gen} and \ref{THM 2nd order}} \label{SECT proofs 1st 2nd order}
\setcounter{equation}{0}

\subsection{Proof of Theorem~\ref{THM gen}}

Assume the setting of Theorem~\ref{THM gen}. As previously discussed, the imposed assumptions ensure that the interval $[-\pi,\pi]$ in the defining integral of $I(\lambda)$ in \eqref{I} can be deformed to the contour $\PC$ while conserving the integral. The method of steepest descent then allows us to capture the asymptotic behavior by localizing the integral around the critical point $t_0$. In spite of the factor  $\lambda$, the second term within the square brackets in \eqref{I} does not in itself determine the leading-order behavior. To see this we integrate by parts 

\begin{equation*}
\int_{-\pi}^\pi \lambda g' v e^{\lambda g + i \tilde{\lambda} x_2} dt = \int_{-\pi}^\pi \left(e^{\lambda g} \right)' v e^{i \tilde{\lambda} x_2} dt = - \int_{-\pi}^\pi \left( v' + i \tilde{\lambda} v x_2' \right) e^{\lambda g + i\tilde{\lambda} x_2} dt.
\end{equation*}

\n We now insert the above identity in \eqref{I} and use the fact that $v' = V \cdot x'$ to obtain

\begin{equation*}
I(\lambda) =
\int_{-\pi}^\pi \left[ \left( (x_2', -x_1') - i (x_1', x_2') \right) \cdot V + \tilde{\lambda} v \left( x_2' + i x_1' \right) \right] e^{\lambda g + i \tilde{\lambda} x_2} dt.
\end{equation*}

\n Note that

\begin{equation*}
(x_2', -x_1') - i (x_1', x_2') =(-i, -1) (x_1' + i x_2')  = (-i, -1) g',
\end{equation*}

\n and therefore

\begin{equation*}
I(\lambda) = 
\frac{1}{\lambda} \int_{-\pi}^\pi \left[ \left(e^{\lambda g} \right)' e^{i \tilde{\lambda} x_2} V \cdot (-i,-1) + \lambda \tilde{\lambda}v (x_2' + ix_1') e^{\lambda g + i \tilde{\lambda} x_2}\right] dt.  
\end{equation*}

\n Now integration by parts and multiplication by $\lambda$ gives

\begin{equation} \label{I 0.5}
\lambda I(\lambda) = \int_{-\pi}^\pi \left[ \lambda \tilde{\lambda} (x_2' + ix_1') v + V' \cdot (i,1) + i \tilde{\lambda} x_2' V \cdot (i,1)  \right] e^{\lambda g + i \tilde{\lambda} x_2} dt.
\end{equation}

\n From the asymptotic relation \eqref{lambda tild asymp} we know that $\lambda \tilde{\lambda}$ behaves like a constant, therefore the leading contribution should come from the first two terms in the square brackets in \eqref{I 0.5}. Specifically, as $\lambda \to \infty$, 

\begin{equation*}
\lambda \tilde{\lambda} \sim \frac{k^2 q}{2}. 
\end{equation*}

\n Therefore, we add and subtract this constant from $\lambda \tilde{\lambda}$  to rewrite \eqref{I 0.5} as

\begin{equation} \label{I 1}
\lambda I(\lambda) = \int_{-\pi}^\pi f  e^{\lambda g + i\tilde{\lambda} x_2} dt +  \int_{-\pi}^\pi \left[ \left(\lambda \tilde{\lambda} - \tfrac{k^2 q}{2} \right) (x_2' + ix_1') v + i\tilde{\lambda} x_2' V \cdot (i,1)  \right] e^{\lambda g + i\tilde{\lambda} x_2} dt, 
\end{equation}

\n with

\begin{equation*}
f = \frac{k^2 q}{2} (x_2' + ix_1') v + V' \cdot (i,1).
\end{equation*}

\n Finally, we introduce

\begin{equation} \label{h}
h_\lambda(t) = e^{i \tilde{\lambda} x_2(t)} - 1.
\end{equation}

\n Deforming the contour of integration, we now arrive at the following decomposition

\begin{equation} \label{I decomp}
\lambda I(\lambda) = I_0(\lambda) + R_1(\lambda) + R_2(\lambda),
\end{equation}

\n where

\begin{equation} \label{I0 R1}
I_0(\lambda) = \int_{\PC} f  e^{\lambda g} dt,
\qquad \qquad
R_1(\lambda) = \int_{\PC} f  e^{\lambda g} h_\lambda dt
\end{equation}

\n and

\begin{equation} \label{R2}
R_2(\lambda) = \int_{\PC} \left[ \left(\lambda \tilde{\lambda} - \tfrac{k^2 q}{2} \right) (x_2' + ix_1') v + i\tilde{\lambda} x_2' V \cdot (i,1)  \right] e^{\lambda g + i\tilde{\lambda} x_2} dt.
\end{equation}

\n If $f(t_0) \neq 0$, the method of steepest descent implies that, as $\lambda \to \infty$ (cf. Theorem 7.1 in \cite{olver})

\begin{equation*}
I_0(\lambda) \sim \sqrt{\frac{-2\pi}{\lambda g''(t_0)}} e^{\lambda g(t_0)} f(t_0),
\end{equation*}

\n where an appropriate branch of the square root must be chosen (see Remark~\ref{REM branch}). Let us show that the remaining terms in \eqref{I decomp} are much smaller than $\frac{1}{\sqrt{\lambda}} e^{\lambda g(t_0)}$. Regarding $R_1$, we first note that as $\PC$ is a finite contour, $h_\lambda$ can be made small as $\lambda \to \infty$ uniformly in $t$. Namely, there exists $\lambda_0, M >0$ such that for all $\lambda > \lambda_0$

\begin{equation} \label{h bound}
|h_\lambda(t)| \leq \frac{M}{\lambda}, \qquad \qquad \forall \ t \in \PC.
\end{equation} 

\n Hence,

\begin{equation*}
|R_1(\lambda)| \leq \frac{M|\PC|}{\lambda} e^{\lambda \Re g(t_0)} \max_{\PC} |f|  \ll \frac{1}{\sqrt{\lambda}} e^{\lambda \Re g(t_0)}. 
\end{equation*}

\n Regarding $R_2$, we have

\begin{equation} \label{lambda lambda_tild}
\lambda \tilde{\lambda} - \frac{k^2 q}{2} \sim - \frac{k^4 q^2}{8 \lambda^2} 
\end{equation}

\n and therefore an analogous estimate gives that $|R_2(\lambda)|   \ll \frac{1}{\sqrt{\lambda}} e^{\lambda \Re g(t_0)}$. In combination we obtain

\begin{equation*}
\lambda I(\lambda) \sim \sqrt{\frac{-2\pi}{\lambda g''(t_0)}} e^{\lambda g(t_0)} f(t_0).     
\end{equation*}

\n It remains to rewrite $f(t_0)$ in a more convenient form in terms of the incident wave $u^{\text{in}}$.

\begin{lemma}\label{LEM f(t0)}
One has the formula $\displaystyle f(t_0) = k^2 (q-1) u^{\text{in}}(x(t_0)) x_2'(t_0).$
\end{lemma}

\begin{proof}
Since $t_0$ is a critical point $g'(t_0) = 0$, {\it i.e.}, $x_1'(t_0) + i x_2'(t_0) = 0$. By definition 

\begin{equation*}
V(t) = (V_1(t), V_2(t)) = \left( \partial_{x_1}u^{\text{in}}(x(t)), \partial_{x_2}u^{\text{in}}(x(t))  \right).
\end{equation*}

\n Therefore,

\begin{equation*}
V'(t) \cdot (i,1) = iV_1'(t) + V_2'(t) = \left[ i \nabla \partial_{x_1}u^{\text{in}}(x(t)) + \nabla \partial_{x_2}u^{\text{in}}(x(t))  \right] \cdot x'(t).
\end{equation*}

\n Now evaluating the above expression at $t_0$, replacing $x_1'(t_0) = -i x_2'(t_0)$ and simplifying the dot product, we see that the mixed partial derivatives cancel, and we obtain

\begin{equation*}
V'(t_0) \cdot (i,1) = \Delta u^{\text{in}}(x(t_0))  x_2'(t_0) = - k^2 u^{\text{in}}(x(t_0))  x_2'(t_0), 
\end{equation*}

\n where in the last step we used the equation \eqref{u^in Helm} for $u^{\text{in}}$. Recalling that $v(t_0) = u^{\text{in}}(x(t_0))$ we then arrive at

\begin{equation*}
f(t_0) = k^2 q u^{\text{in}}(x(t_0)) x_2'(t_0)  - k^2 u^{\text{in}}(x(t_0))  x_2'(t_0), 
\end{equation*}

\n which concludes the proof.

\end{proof}

\subsection{Proof of Theorem~\ref{THM 2nd order}} \label{SECT next term}

Our starting point is the decomposition \eqref{I decomp}, and we assume that $f(t_0) = 0$. In this case, as $\lambda \to \infty$ (see \cite{olver})

\begin{equation*}
I_0(\lambda) = \frac{1}{\lambda^{\frac{3}{2}}} e^{\lambda g(t_0)} \left[ c_0 + O\left( \tfrac{1}{\lambda} \right)\right],
\end{equation*}

\n where

\begin{equation} \label{c0}
c_0 = \frac{2\sqrt{\pi}}{\left(-2g''(t_0) \right)^{\frac{3}{2}}} \left[ f''(t_0) - f'(t_0) \frac{g'''(t_0)}{g''(t_0)} \right].
\end{equation}

\n Now $I_0$ alone does not determine the leading-order behavior in \eqref{I decomp}. The second term in the integral \eqref{R2} defining $R_2(\lambda)$  also contributes to the leading behavior. To see this, we write

\begin{equation*}
R_2(\lambda) = R_{21}(\lambda) + R_{22}(\lambda), 
\end{equation*}

\n where

\begin{equation*}
R_{21}(\lambda) = i \frac{k^2 q}{2\lambda} \int_{\PC} x_2' V \cdot (i,1) e^{\lambda g} dt
\end{equation*}

\n and 

\begin{equation*}
\begin{split}
R_{22}(\lambda) =& \int_{\PC} \left[ \left(\lambda \tilde{\lambda} - \frac{k^2 q}{2} \right) (x_2' + ix_1') v + i \left(\tilde{\lambda} - \frac{k^2 q}{2 \lambda} \right) x_2' V \cdot (i,1)  \right] e^{\lambda g + i\tilde{\lambda} x_2} dt +
\\[.1in]
&+ i \frac{k^2 q}{2\lambda} \int_{\PC} x_2' V \cdot (i,1) e^{\lambda g} h_\lambda dt,
\end{split}
\end{equation*}

\n with $h_\lambda$ defined by \eqref{h}. Applying the method of stationary phase to $R_{21}$ we conclude that, as $\lambda \to \infty$

\begin{equation*}
R_{21}(\lambda) = \frac{1}{\lambda^{\frac{3}{2}}} e^{\lambda g(t_0)} \left[ c_1 + O\left( \tfrac{1}{\lambda} \right)\right]
\end{equation*}

\n with

\begin{equation} \label{c1}
c_1 = i\sqrt{\pi}  k^2 q  \frac{x_2'(t_0) V(t_0) \cdot (i,1)}{\left(-2g''(t_0) \right)^{\frac{1}{2}}}.
\end{equation}

\n Consequently, we can rewrite \eqref{I decomp} as follows

\begin{equation} \label{I decomp 2}
\lambda I(\lambda) = \frac{1}{\lambda^{\frac{3}{2}}} e^{\lambda g(t_0)} \left[ c_0 + c_1 + O\left( \tfrac{1}{\lambda} \right)\right] + R_1(\lambda) + R_{22}(\lambda).
\end{equation}

\n Assume now

\begin{equation*}
C_2 = c_0 + c_1 \neq 0
\end{equation*}

\n and let us show that the first term determines the leading behavior of the right-hand side of \eqref{I decomp 2}. Because of the estimates \eqref{h bound}, \eqref{lambda lambda_tild}, the nature of the contour $\PC$, and the asymptotic relation 

\begin{equation*}
\tilde{\lambda} - \frac{k^2 q}{2 \lambda} \sim - \frac{k^4 q^2}{8 \lambda^3} ~~ \hbox{ as } \lambda \rightarrow \infty,
\end{equation*}
the term $R_{22}(\lambda)$ is clearly asymptotically of smaller order than $\lambda^{-\frac32}e^{\lambda \Re g(t_0)}$.
\n It remains to analyze the term $R_{1}(\lambda)$ defined in \eqref{I0 R1}. First, let us write

\begin{equation*}
h_\lambda(t) = e^{i \tilde{\lambda} x_2(t)} - 1 = i \tilde{\lambda} x_2(t) + \tilde{h}_\lambda(t),
\end{equation*}

\n so that

\begin{equation*}
R_{1}(\lambda) = i \tilde{\lambda} \int_{\PC} f x_2 e^{\lambda g} dt + \int_{\PC} f e^{\lambda g} \tilde{h}_\lambda dt.
\end{equation*}

\n The second integral is much smaller than $\lambda^{-\frac{3}{2}} e^{\lambda \Re g(t_0)}$, because analogously to \eqref{h bound} we can estimate $|\tilde{h}_\lambda|$ by $1/ \lambda^2$ uniformly for $t \in \PC$. So is the first integral, because $\tilde{\lambda}$ is of order $1 / \lambda$ and $f(t_0) x_2(t_0) = 0$ and therefore, for some constant $c \in \CC$, 

\begin{equation*}
\tilde{\lambda}\int_{\PC} f x_2 e^{\lambda g} dt = \frac{1}{\lambda^{\frac{5}{2}}} e^{\lambda g(t_0)} \left[ c + O\left( \tfrac{1}{\lambda} \right)\right]  ~~\hbox {as } \lambda \to \infty.
\end{equation*}

\n Thus, we obtain

\begin{equation*}
\lambda I(\lambda) \sim \frac{C_2}{\lambda^{\frac{3}{2}}} e^{\lambda g(t_0)},
\end{equation*}

\n which concludes the proof.

\section{Proof of Theorem~\ref{THM corner}} \label{SECT corner proof}
\setcounter{equation}{0}

As already discussed in Section~\ref{SEC corner}, it suffices to analyze the (sum of the) integrals $I_1$ and $I_2$, which are taken over the line segments $L_1$ and $L_2$, respectively. Let us focus on $I_1$; the result for $I_2$ will be analogous. We parametrize the contour $-L_1$ (the contour $L_1$ traversed backwards) by $x_1(t)=t$ and $x_2(t) = - m t$ for $t \in [a_1, 0]$, where $m = \tan \theta$. Thus, $g(t) =(1 -i m) t$. Proceeding in the same way as in the derivation of formula~\eqref{I 1}, by integrating by parts twice, we find that

\begin{equation*}
\begin{split}
-I_1(\lambda) = &\int_{a_1}^0 \left[ (x_2', -x_1') \cdot V +  i \lambda g' v + i \tilde{\lambda} x_1' v  \right] e^{\lambda g + i\tilde{\lambda} x_2} dt
= \frac{1}{\lambda} \int_{a_1}^0 f  e^{\lambda g + i\tilde{\lambda} x_2} dt +
\\[.2in]
&+  \frac{1}{\lambda} \int_{a_1}^0 \left[ \left(\lambda \tilde{\lambda} - \tfrac{k^2 q}{2} \right) (x_2' + ix_1') v + i\tilde{\lambda} x_2' V \cdot (i,1)  \right] e^{\lambda g + i\tilde{\lambda} x_2} dt + 
\\[.2in]
&+\left. \left( i v -  \frac{1}{\lambda} V \cdot (i,1)  \right) e^{\lambda g + i \tilde{\lambda} x_2} \right|_{a_1}^0. 
\end{split}
\end{equation*}
Note that in contrast to \eqref{I 1} this formula contains boundary terms, since the contour is not closed.
We recall that $v, V$ are defined by \eqref{v V}, $\tilde{\lambda}$ is given by \eqref{lambda tild asymp} and

\begin{equation*}
f = \frac{k^2 q}{2} (x_2' + ix_1') v + V' \cdot (i,1).
\end{equation*}

\n Separating out the $t=0$ contribution from the boundary term above, we can now write

\begin{equation} \label{I_1}
-I_1(\lambda) = i v(0) -  \frac{1}{\lambda} V(0) \cdot (i,1) + \frac{1}{\lambda} \int_{a_1}^0 f  e^{\lambda g} dt + R_1(\lambda),
\end{equation}

\n where

\begin{equation*}
\begin{split}
R_1(\lambda) = &-\left( i v(a_1) -  \frac{1}{\lambda} V(a_1) \cdot (i,1)  \right) e^{\lambda g(a_1) + i \tilde{\lambda} x_2(a_1)} + \frac{1}{\lambda} \int_{a_1}^0 f  e^{\lambda g} \left[ e^{i\tilde{\lambda} x_2} - 1 \right] dt +
\\[.2in]
&+ \frac{1}{\lambda} \int_{a_1}^0 \left[ \left(\lambda \tilde{\lambda} - \tfrac{k^2 q}{2} \right) (x_2' + ix_1') v + i\tilde{\lambda} x_2' V \cdot (i,1)  \right] e^{\lambda g + i\tilde{\lambda} x_2} dt.
\end{split}
\end{equation*}

\n It is not hard to show that $R_1(\lambda) = O \left( \lambda^{-3} \right)$, as $\lambda \to 0$. Indeed, the first term is exponentially small, as $\Re g(a_1)<0$. The second term can be bounded using \eqref{h bound}. Namely, there exists $C>0$, such that 

\begin{equation*}
\left| \frac{1}{\lambda} \int_{a_1}^0 f  e^{\lambda g} \left[ e^{i\tilde{\lambda} x_2} - 1 \right] dt \right| \leq \frac{C}{\lambda^2} \int_{a_1}^0 e^{\lambda t} dt = C \frac{1 - e^{\lambda a_1}}{\lambda^3} \leq \frac{C}{\lambda^3}~,
\end{equation*}

\n where in the last step we used that $a_1<0$. The third term in $R_1$ can be bounded analogously. Now, since $\Re g(t)$ attains its unique maximum over the interval $[a_1,0]$ at the boundary point $t=0$, we can apply Laplace's method to obtain the asymptotic formula \cite{olver}

\begin{equation*}
\int_{a_1}^0 f  e^{\lambda g} dt = e^{\lambda g(0)} \left[ \frac{f(0)}{g'(0) \lambda} + O \left( \lambda^{-2} \right) \right].
\end{equation*}

\n We insert this in \eqref{I_1} and simplify the resulting expression. In view of 

\begin{equation*}
V'(0) \cdot (i,1) = i u^{\text{in}}_{x_1 x_1}(0) + (1-im) u^{\text{in}}_{x_1x_2}(0) - m u^{\text{in}}_{x_2x_2}(0),
\end{equation*}

\n the resulting asymptotic formula reads

\begin{equation*}
-I_1(\lambda) = i u^{\text{in}}(0) -  \frac{1}{\lambda} \nabla u^{\text{in}}(0) \cdot (i,1) + \frac{c_1}{\lambda^2}  + O \left( \lambda^{-3} \right),
\end{equation*}

\n with

\begin{equation*}
c_1 = \frac{1}{1-im} \left[ \frac{k^2q}{2} (i-m) u^{\text{in}}(0) + i u^{\text{in}}_{x_1 x_1}(0) + (1-im) u^{\text{in}}_{x_1x_2}(0) - m u^{\text{in}}_{x_2x_2}(0) \right].
\end{equation*}

Turning to $I_2$, the line segment $L_2$ is parametrized by $x_1(t) = t$ and $x_2(t) = m t$ for $t \in [a_2,0]$. Therefore, analogously, we have 

\begin{equation*}
I_2(\lambda) = i u^{\text{in}}(0) -  \frac{1}{\lambda} \nabla u^{\text{in}}(0) \cdot (i,1) + \frac{c_2}{\lambda^2}  + O \left( \lambda^{-3} \right),
\end{equation*}

\n where $c_2$ is obtained from $c_1$ by negating $m$:

\begin{equation*}
c_2 = \frac{1}{1+im} \left[ \frac{k^2q}{2} (i+m) u^{\text{in}}(0) + i u^{\text{in}}_{x_1 x_1}(0) + (1+im) u^{\text{in}}_{x_1x_2}(0) + m u^{\text{in}}_{x_2x_2}(0) \right].
\end{equation*}

\n Thus,

\begin{equation*}
I_1(\lambda) + I_2(\lambda) =  \frac{c_2 - c_1}{\lambda^2}  + O \left( \lambda^{-3} \right).
\end{equation*}

\n A direct calculation and simplification shows that

\begin{equation*}
c_2 - c_1 = \frac{2m}{1+m^2} \left[ k^2 q u^{\text{in}}(0) + \Delta u^{\text{in}}(0) \right].
\end{equation*}

\n To conclude the proof, it remains to use the Helmholtz equation satisfied by the incident wave and substitute $\Delta u^{\text{in}}(0) = -k^2 u^{\text{in}}(0)$. 


\section{Appendix: The Pompeiu and Schiffer problems}
\label{appendix}
\setcounter{equation}{0}

In this appendix we give a brief overview of the Pompeiu and Schiffer problems and relate both to the nonscattering problem. In doing so, we  significantly rely on \cite{bern80} and \cite{GS91} (see also \cite{volch}). Throughout, let $D \subset \RR^d$, with $d \geq 2$, be a nonempty, open, bounded set, and let $\Sigma$ denote the group of rigid motions of $\RR^d$ onto itself:

\begin{equation*}
\Sigma =\left\{ \sigma :\RR^d \to \RR^d: \sigma(x) = Ax + b, \ \text{where} \ A \in SO(d) \ \text{and} \ b \in \RR^d \right\}
\end{equation*}

\begin{definition}
We say that $D$ has the Pompeiu property, iff the only $f\in C(\RR^d)$ satisfying

\begin{equation*}
\int_{\sigma(D)} f(x) dx = 0, \qquad \qquad \forall \ \sigma \in \Sigma
\end{equation*}

\n is the function $f=0$.

\end{definition}

The Pompeiu problem \cite{pomp} asks for a characterization of the sets $D$ that possess the Pompeiu property. Although considerable research has been devoted to this problem, \cite{zalc}, it remains open: no explicit characterization — {\it e.g.}, a geometric one — is known, even in the case $d=2$. A key result of Brown, Schreiber, and Taylor \cite{BST73} provides an implicit characterization of the Pompeiu property in terms of the complexified Fourier transform

\begin{equation}
\hat{\chi}_D(\xi) = \int_D e^{i \xi \cdot x} dx,
\end{equation}

\n which extends as an entire function of $\xi = (\xi_1,...,\xi_d) \in \CC^d$. Namely: $D$ fails to have the Pompeiu property iff there exists $\rho \in \CC \backslash \{0\}$, such that

\begin{equation} \label{FT vanish on M}
\hat{\chi}_D = 0 \qquad \text{on} \quad \CM_\rho =\{\xi \in \CC^d: \xi \cdot \xi = \rho\}.
\end{equation}

\n The authors first show that the failure of the Pompeiu property is equivalent to the existence of a common zero $\xi_0$ for the family of functions $\left\{\hat{\chi}_{\sigma(D)} : \sigma \in \Sigma\right\}$. Since $\hat{\chi}_{D}(0)$ equals the measure of $D$, it follows that $\xi_0 \neq 0$. Now, $\hat{\chi}_{\sigma(D)}$ vanishes at $\xi_0$ for all $\sigma \in \Sigma$, and therefore $\hat{\chi}_D$ vanishes on the ``complexified sphere" $\CM_\rho$, where $\rho = \xi_0 \cdot \xi_0$. Indeed, by a change of variables in the Fourier integral, for any $\xi \in \CM_\rho$, there exists a rotation $\sigma \in \Sigma$, such that $\hat{\chi}_D(\xi) = \hat{\chi}_{\sigma(D)}(\xi_0)$. We note that Fourier transform techniques, in the context of the Pompeiu problem, were first used by Zalcman \cite{zalc}.

Pompeiu \cite{pomp} and Christov \cite{chris} showed that a square in $\RR^2$ has the Pompeiu property. In contrast, Tchakaloff \cite{Chak44} observed that the disk does not have the Pompeiu property. Indeed, let $B_r$ denote the disk of radius $r$ centered at the origin. One can compute the Fourier transform explicitly \cite{zalc}:

\begin{equation} \label{J_1 disk}
\hat{\chi}_{B_r}(\xi) = 2\pi r \frac{J_1 \left( r \sqrt{\xi_1^2 + \xi_2^2} \right)}{\sqrt{\xi_1^2 + \xi_2^2}}, \qquad \qquad \xi \in \CC^2,
\end{equation}

\n where $J_1$ denotes the Bessel function of order 1. It is now evident that if $\rho > 0$ is such that $r\sqrt{\rho}$ is a zero of $J_1$, then $\hat{\chi}_{B_r} = 0$ on $\CM_\rho$, and hence (according to the characterization described above) $B_r$ does not have the Pompeiu property. On the other hand, it is shown in \cite{BST73} that any elliptical region $E = \left\{x : \frac{x_1^2}{a^2} + \frac{x_2^2}{b^2} < 1\right\}$, with $a>b>0$, has the Pompeiu property. This follows from the formula

\begin{equation*}
\hat{\chi}_{E}(\xi) = 2\pi ab \frac{J_1 \left( \sqrt{a^2 \xi_1^2 + b^2 \xi_2^2} \right)}{\sqrt{a^2\xi_1^2 + b^2\xi_2^2}}, \qquad \qquad \xi \in \CC^2.
\end{equation*}

\n Indeed, since $J_1$ has only real zeros, the expression above can vanish on $\CM_\rho$ only if the argument of $J_1$ remains real valued there, which occurs only when $a=b$ and $\rho>0$. By studying the asymptotic behavior of the Fourier transform, it was also shown in \cite{BST73} that every convex set with at least one true corner has the Pompeiu property. Garofalo and Segala \cite{GS91} (see also \cite{GS91TR, GS94} and the references therein) studied domains with smooth boundaries using the characterization \eqref{FT vanish on M}, combined with the asymptotic analysis of the Fourier transform via the method of steepest descent. Their approach follows that of Bernstein \cite{bern80}, who used the method of stationary phase to study the asymptotics of the Fourier transform in the context of the Schiffer problem (see below).

The celebrated conjecture — also referred to as the Pompeiu conjecture — asserts that, modulo sets of measure zero, the disk is the only simply connected domain in $\RR^2$ that does not have the Pompeiu property. More generally, if $\partial D$ is homeomorphic to the unit sphere in $\RR^d$, then it is conjectured that $D$ has the Pompeiu property iff it is not a ball. This conjecture is closely related to a symmetry problem in partial differential equations known as Schiffer's problem. To describe the latter, for the remainder of this section, let us additionally assume that $D$ is a Lipschitz domain. Williams \cite{w1} showed the following equivalence, attributing it to Brown, Schreiber, and Taylor: a simply connected, bounded  Lipschitz domain, $D$, does not have the Pompeiu property iff the following overdetermined boundary value problem

\begin{equation} \label{u Schif}
\begin{cases}
-\Delta u = \rho u, \qquad &\text{in} \ D
\\
u = 1, &\text{on} \ \partial D
\\
\partial_n u = 0, &\text{on} \ \partial D
\end{cases}
\end{equation}

\n has a solution for some $\rho > 0$. The proof proceeds as follows. Suppose that $D$ does not have the Pompeiu property. Then $\hat{\chi}_D = 0$ on $\CM_\rho$ for some $0 \neq \rho \in \CC$. Since $\xi \cdot \xi - \rho = \xi_1^2 + ... +\xi_d^2 - \rho$ is an irreducible polynomial in $\CC^d$, we conclude that the entire function $\hat{\chi}_D$ must be divisible by it, {\it i.e.},

\begin{equation*}
T(\xi) = \frac{\hat{\chi}_D(\xi)}{\xi_1^2 + ... \xi_d^2 - \rho}
\end{equation*}

\n is an entire function in $\CC^d$. Therefore (cf. \cite{hor}) we may write $T = \hat{\psi}$, where $\psi$ is a distribution with compact support in $\RR^d$. Cross multiplying the above equation and inverting the Fourier transform we arrive at

\begin{equation*}
\Delta \psi + \rho \psi = - \chi_D
\end{equation*}

\n Since $\psi$ has compact support and is real analytic outside of $D$, it must vanish there: $\psi = 0$ in $\RR^d \backslash \overline{D}$. Consequently, the restriction $\psi \big |_D$ satisfies

\begin{equation} \label{BVP Schif}
\begin{cases}
\Delta \psi + \rho \psi = -1, \qquad &\text{in} \ D
\\
\psi = \partial_n \psi = 0, &\text{on} \ \partial D.
\end{cases}
\end{equation}

\n It remains to set $u = \rho \psi + 1$. The conclusion that $\rho>0$ follows from the fact that $u$ is a Neumann eigenfunction of $-\Delta$. The converse implication follows analogously.

We say that $D$ has the Schiffer property if \eqref{u Schif} has no solution for any $\rho>0$. For bounded, simply connected Lipschitz domains having the Pompeiu property or the Schiffer property is equivalent. Using tools from free boundary regularity theory, Williams \cite{w2} showed that if $\partial D$ is not a real analytic hypersurface, then $D$ has the Schiffer property. Bernstein \cite{bern80} (see also \cite{vog94}) showed that if \eqref{u Schif} has a solution for infinitely many values of $\rho$, then $D$ is a disk (provided $D \subset \RR^2$ is simply connected and bounded with a $C^2$ boundary). This result was later extended to any number of dimensions by Bernstein and Yang \cite{BY}.

The boundary value problem arising in the nonscattering problem, and given by \eqref{u system}, is related to the overdetermined problem \eqref{BVP Schif}. Indeed, if in \eqref{u system} we replace the incident wave $u^{\textbf{in}}$ by a constant function $1$, then the function $\psi = u/k^2(q-1)$ solves \eqref{BVP Schif} with $\rho=k^2q$. Of course, the incident wave $u^{\textbf{in}}$ cannot actually be a constant, as it must satisfy the Helmholtz equation with nonzero wave number $k$. 


\section*{Acknowledgements}
{This research was partially supported by NSF Grant DMS-22-05912.}

\bibliographystyle{abbrv}
\bibliography{ref}

\begin{thebibliography}{10}

\bibitem{MAPL}
\url{https://sites.math.rutgers.edu/~vogelius/software.html}.

\bibitem{bern80}
C.~A. Berenstein.
\newblock An inverse spectral theorem and its relation to the {P}ompeiu
  problem.
\newblock {\em J. Analyse Math.}, 37:128--144, 1980.

\bibitem{BY}
C.~A. Berenstein and P.~Yang.
\newblock An overdetermined {N}eumann problem in the unit disk.
\newblock {\em Adv. in Math.}, 44(1):1--17, 1982.

\bibitem{BPS}
E.~Bl{\aa}sten, L.~P\"aiv\"arinta, and J.~Sylvester.
\newblock Corners always scatter.
\newblock {\em Comm. Math. Phys.}, 331(2):725--753, 2014.

\bibitem{BST73}
L.~Brown, B.~M. Schreiber, and B.~A. Taylor.
\newblock Spectral synthesis and the {P}ompeiu problem.
\newblock {\em Ann. Inst. Fourier (Grenoble)}, 23(3):125--154, 1973.

\bibitem{Buk}
A.~L. Bukhgeim.
\newblock Recovering a potential from cauchy data in the two-dimensional case.
\newblock {\em Journal of Inverse and Ill-posed Problems}, 16(1):19--33, 2008.

\bibitem{CGH}
F.~Cakoni, D.~Gintides, and H.~Haddar.
\newblock The existence of an infinite discrete set of transmission
  eigenvalues.
\newblock {\em SIAM J. Math. Anal.}, 42:237--255, 2010.

\bibitem{CV}
F.~Cakoni and M.~S. Vogelius.
\newblock Singularities almost always scatter: Regularity results for
  non-scattering inhomogeneities.
\newblock {\em Comm. on Pure \& Applied Math.}, 76:4022--4047, 2023.

\bibitem{chris}
C.~Christov.
\newblock Sur un probl\`eme de {M}. {P}ompeiu.
\newblock {\em Mathematica, Timi\c soara}, 23:103--107, 1948.

\bibitem{CK}
D.~Colton and R.~Kress.
\newblock {\em Inverse acoustic and electromagnetic scattering theory},
  volume~93 of {\em Applied Mathematical Sciences}.
\newblock Springer-Verlag, Berlin, fourth edition, 2019.

\bibitem{CK19}
D.~Colton and R.~Kress.
\newblock {\em Inverse acoustic and electromagnetic scattering theory},
  volume~93 of {\em Applied Mathematical Sciences}.
\newblock Springer, Cham, fourth edition, [2019] \copyright 2019.

\bibitem{EH}
J.~Elschner and G.~Hu.
\newblock Acoustic scattering from corners, edges and circular cones.
\newblock {\em Arch. Rational Mech. Anal.}, 228(2):653--690, 2018.

\bibitem{GS91}
N.~Garofalo and F.~Seg\`ala.
\newblock Asymptotic expansions for a class of {F}ourier integrals and
  applications to the {P}ompeiu problem.
\newblock {\em J. Analyse Math.}, 56:1--28, 1991.

\bibitem{GS91TR}
N.~Garofalo and F.~Seg\`ala.
\newblock New results on the {P}ompeiu problem.
\newblock {\em Trans. Amer. Math. Soc.}, 325(1):273--286, 1991.

\bibitem{GS94}
N.~Garofalo and F.~Seg\`ala.
\newblock Univalent functions and the {P}ompeiu problem.
\newblock {\em Trans. Amer. Math. Soc.}, 346(1):137--146, 1994.

\bibitem{hor}
L.~H\"ormander.
\newblock {\em The analysis of linear partial differential operators. {I}}.
\newblock Classics in Mathematics. Springer-Verlag, Berlin, 2003.
\newblock Distribution theory and Fourier analysis, Reprint of the second
  (1990) edition [Springer, Berlin; MR1065993 (91m:35001a)].

\bibitem{SS24}
P.-Z. Kow, S.~Larson, M.~Salo, and H.~Shahgholian.
\newblock Quadrature domains for the {H}elmholtz equation with applications to
  non-scattering phenomena.
\newblock {\em Potential Anal.}, 60(1):387--424, 2024.

\bibitem{LHY}
L.~Li, G.~Hu, and J.~Yang.
\newblock Piecewise-analytic interfaces with weakly singular points of
  arbitrary order always scatter.
\newblock {\em J. Func. Anal.}, 284:109800, 2023.

\bibitem{olver}
F.~W.~J. Olver.
\newblock {\em Asymptotics and special functions}.
\newblock AKP Classics. A K Peters, Ltd., Wellesley, MA, 1997.
\newblock Reprint of the 1974 original [Academic Press, New York; MR0435697 (55
  \#8655)].

\bibitem{PSV}
L.~P\"aivarinta, M.~Salo, and E.~V. Vesalainen.
\newblock Strictly convex corners scatter.
\newblock {\em Rev. Mat. Iberoam.}, 33:1369--1396, 2017.

\bibitem{pomp}
D.~Pompeiu.
\newblock Sur certains systèmes d'équations linéaires et sur une propriété
  intégrale des fonctions de plusieurs variables.
\newblock {\em C. R. Acad. Sci. Paris}, 188:1138--1139, 1929.

\bibitem{SS21}
M.~Salo and H.~Shahgholian.
\newblock Free boundary methods and non-scattering phenomena.
\newblock {\em Res. Math. Sci.}, 8(4):Paper No. 58, 19, 2021.

\bibitem{SS25}
M.~Salo and H.~Shahgholian.
\newblock A free boundary approach to non-scattering obstacles with vanishing
  contrast.
\newblock 2025. arXiv 2506.22328.

\bibitem{Chak44}
L.~Tchakaloff.
\newblock Sur un probl\`eme de {D}. {P}omp\'eiu.
\newblock {\em Annuaire [Godi\v snik] Univ. Sofia. Fac. Phys.-Math. Livre 1.},
  40:1--14, 1944.

\bibitem{vog94}
M.~Vogelius.
\newblock An inverse problem for the equation {$\Delta u=-cu-d$}.
\newblock {\em Ann. Inst. Fourier (Grenoble)}, 44(4):1181--1209, 1994.

\bibitem{VX}
M.~S. Vogelius and J.~Xiao.
\newblock Finiteness results concerning nonscattering wave numbers for incident
  plane and {H}erglotz waves.
\newblock {\em SIAM J. Math. Anal.}, 53(5):5436--5464, 2021.

\bibitem{VX2}
M.~S. Vogelius and J.~Xiao.
\newblock Finiteness results for non-scattering {H}erglotz waves. the case of
  inhomogeneities obtained by very general perturbation of disks. to appear.
\newblock {\em SIAM J. Math. Anal.}, 2025.

\bibitem{volch}
V.~V. Volchkov.
\newblock {\em Integral geometry and convolution equations}.
\newblock Kluwer Academic Publishers, Dordrecht, 2003.

\bibitem{w1}
S.~A. Williams.
\newblock A partial solution of the {P}ompeiu problem.
\newblock {\em Math. Ann.}, 223(2):183--190, 1976.

\bibitem{w2}
S.~A. Williams.
\newblock Analyticity of the boundary for {L}ipschitz domains without the
  {P}ompeiu property.
\newblock {\em Indiana Univ. Math. J.}, 30(3):357--369, 1981.

\bibitem{zalc}
L.~Zalcman.
\newblock A bibliographic survey of the {P}ompeiu problem.
\newblock In {\em Approximation by solutions of partial differential equations
  ({H}anstholm, 1991)}, volume 365 of {\em NATO Adv. Sci. Inst. Ser. C: Math.
  Phys. Sci.}, pages 185--194. Kluwer Acad. Publ., Dordrecht, 1992.

\end{thebibliography}

\end{document}